\newtheorem{theorem}{Theorem}[section]
\newtheorem{lemma}[theorem]{Lemma}
\newtheorem{prop}[theorem]{Proposition}
\newtheorem{corollary}[theorem]{Corollary}
\newtheorem{Q}[theorem]{Question}
\theoremstyle{definition}
\newtheorem{eg}[theorem]{Example}
\theoremstyle{remark}
\newtheorem{claim}{Claim}[theorem]
\newtheorem{remark}[claim]{Remark}
\DeclareMathOperator{\conv}{conv}
\newcommand{\rhalf}[1]{H_{#1}^r}
\newcommand{\lhalf}[1]{H_{#1}^l}
\newcommand{\Line}[1]{\overline{#1}}
\newcommand{\mc}{\mathcal}
\newcommand{\mb}{\mathbb}
\newcommand{\im}{\mathfrak{i}}
\newcommand{\df}[1]{\textbf{\textit{\color{cyan!10!black} #1}}}
\newcommand{\eps}{\varepsilon}
\DeclareMathOperator{\vis}{vis}
\DeclareMathOperator{\cone}{cone}
\DeclareMathOperator{\sign}{sign}
\DeclareMathOperator{\id}{id}
\title{Inscribable order types}
\date{} 
\author{Michael Gene Dobbins\footnote{%
Department of Mathematical Sciences, Binghamton University, Binghamton, NY, USA.\newline  \texttt{mdobbins@binghamton.edu},\newline 
support from KAIST Advanced Institute for Science-X (KAI-X)} %
\ and Seunghun Lee\footnote{%
Einstein Institute of Mathematics, Hebrew University, Jerusalem, Israel.\newline \texttt{seunghun.lee@mail.huji.ac.il}}%
}
\begin{document}

\maketitle

\begin{abstract}
We call an order type \textit{inscribable} if it is realized by a point configuration where all extreme points are all on a circle. In this paper, we investigate inscribability of order types. 
We first construct an infinite family of \textit{minimally uninscribable} order types. The proof of uninscribability mainly uses Möbius transformations and the Frantz ellipse. 
We further show that every simple order type with at most 2 interior points is inscribable, and that the number of such order types is $\Theta(\frac{4^n}{n^{3/2}})$. 
We also suggest open problems around inscribability.
\end{abstract}

\section{Introduction}\label{section_intro}
The \df{orientation} of an ordered triple of points in the plane is positive when the points appear in counterclockwise order around their convex hull, is negative when they appear in clockwise order, and is zero when they are collinear. 
A pair of finite point sets $P,Q \subset \mb{R}^2$, which we call configurations, have the same order type when there is a bijection 
that preserves the orientation of each triple.
This defines an equivalence relation.  An \df{order type} $\omega$ is an equivalence class of this relation, and we say an element of $\omega$ \df{realizes} $\omega$.
An order type is \df{simple} when there are no collinear triples. 
The \df{extreme} points of a configuration $P$ are the vertices of the convex hull of $P$, i.e., the points of $P$ that can be isolated by a line, and the other points of $P$ are \df{interior} points.
We say a configuration is \df{inscribed} when its extreme points are all on a circle, 
and we say a configuration (or its order type) is \df{inscribable} when it has the same order type as an inscribed configuration, otherwise we say it is \df{uninscribable}. 

The analogous notion of inscribability for polytopes is an active area of research with a long history.
In 1832 Jakob Steiner asked if every 3-dimensional polytope is combinatorially equivalent to a polytope with vertices on a sphere \cite{steiner1832systematische}.
This question was answered negatively about a hundred years later when Ernst Steinitz gave the first example of a polytope that is not inscribable \cite{steinitz1928isoperimetrische}, 
and research around inscribability of polytopes continues to this day 
\cite{akopyan2021beauty,doolittle2020combinatorial,edelsbrunner2018random,chen2017scribability,firsching2017realizability,padrol2016six,gonska2016f,rivin1996characterization}.  

In this paper we introduce what we believe are the first examples of uninscribable configurations.
The smallest is the \df{non-Pascal configuration}; see Figure \ref{figNonPascal}.  This example is inspired by the non-Pappus matroid, which was an important example in the study of realizability of matroids and oriented matroids \cite{levi1926teilung,ringel_simple-non-Pappus}.  

The uninscribability of the non-Pascal configuration follows from Pascal's theorem, which states that if points $p_1,\dots,p_6$ are on an ellipse in that order, then the points  
$c_1 = \Line{p_1p_5}\cap\Line{p_2p_6}$, $c_2 = \Line{p_1p_4}\cap\Line{p_3p_6}$ and $c_3  = \Line{p_2p_4}\cap\Line{p_3p_5}$ are collinear \cite[Theorem 1.4]{projective_geometry_book}. 
This extends Pappus's theorem from classical antiquity, which corresponds to the degenerate case where the ellipse is replaced by a pair of lines.

\begin{figure}
\centering
\includegraphics{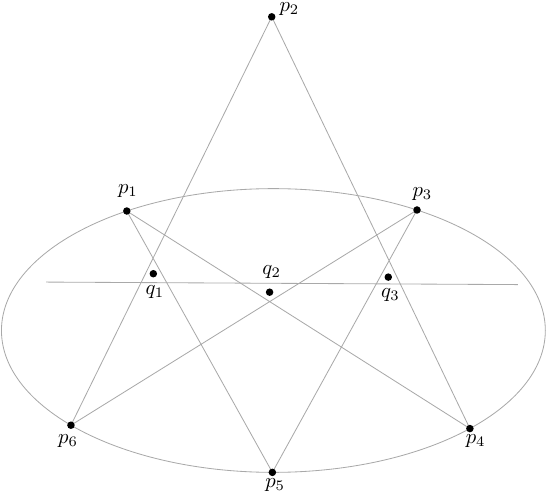}
\caption{The point set $\{p_1,\dots,p_6,q_1,q_2,q_3\}$ is a non-Pascal configuration.}
\label{figNonPascal}
\end{figure}

\begin{eg} \label{eg_Non_Pascal}
Suppose that the non-Pascal configuration could be inscribed. 
Then, the points $c_1 = \Line{p_1p_5}\cap\Line{p_2p_6}$, $c_2 = \Line{p_1p_4}\cap\Line{p_3p_6}$ and $c_3  = \Line{p_2p_4}\cap\Line{p_3p_5}$ would be collinear by Pascal's theorem, which would force the orientation of $Q = (q_1,q_2,q_3)$ to be negative, but $Q$ is positively oriented by definition of the non-Pascal configuration. \qed
\end{eg}

In addition to being the uninscribable configuration with the fewest number of points, we show that the non-Pascal configuration is minimal in the following two ways. 
%In Proposition \ref{prop_(5,3)_inscribable} we show that every configuration with at most 5 extreme points in inscribable, and in Theorem \ref{theorem_two_interior_inscribable} we show that every configuration with at most 2 interior points is inscribable. 

\begin{theorem}\label{theorem_two_interior_inscribable}
Every simple order type with at most 2 interior points is inscribable.
\end{theorem}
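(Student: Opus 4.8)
The plan is to reduce to the case of exactly two interior points and then put the extreme points on a circle and fit the two interior points into the resulting chord arrangement. When there are no interior points the configuration is in convex position, and any cyclic order is realized by placing the points on a circle in that order, so those order types are trivially inscribable. When there is exactly one interior point the configuration is a conowheel, and these are inscribable by the necklace characterization recalled above. Thus the whole content is the case of two interior points $p,q$, where I record the combinatorial data as: the cyclic order of the extreme points $E=\{e_1,\dots,e_m\}$, the conowheel type $C_p$ of $E\cup\{p\}$, the conowheel type $C_q$ of $E\cup\{q\}$, and the way the line through $p$ and $q$ separates $E$ (equivalently the signs of the triples $(p,q,e_i)$). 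By Lemma~\ref{lemma_uninscribable_circle_conic} it suffices to put $E$ on a circle.

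First I would fix a realization of the cyclic order on a circle and study the arrangement of the $\binom{m}{2}$ chords through pairs of extreme points. An interior point's conowheel type is exactly the cell of this arrangement that it occupies, so realizing $C_p$ and $C_q$ amounts to the two corresponding cells $R_p,R_q$ being nonempty; these are open and convex, being intersections of halfplanes. The subtlety is that, already for $m=6$, the set of cells present in a generic convex polygon depends on more than the cyclic order (the three long diagonals may form a small triangle of either orientation), so a single circle need not display both $C_p$ and $C_q$. I would control this using the conowheel result twice: $C_p$ and $C_q$ are each realizable on some inscribed hull, and the space of inscribed realizations of a fixed cyclic order is connected, with the degenerations that destroy a given cell (three chords becoming concurrent at a vertex of that cell) confined to a codimension-one subset. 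Moving along a generic path in this space while protecting both $R_p$ and $R_q$ from collapsing, I would aim to arrive at a single inscribed hull on which both conowheel types are present.

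The main obstacle is the coupling between the two interior points: having placed $E$ on the circle with nonempty cells $R_p,R_q$, I must choose $p\in R_p$ and $q\in R_q$ so that the line through them separates $E$ in the prescribed way. Writing the target separation as $E=A\sqcup B$ with $A,B$ consecutive arcs, the requirement is that $\conv(A)$ and $\conv(B)$ lie on opposite sides of the line $pq$, and the admissible lines form the double wedge of common transversals separating these two disjoint convex polygons. So the problem becomes: does some separating line of $\conv(A)$ and $\conv(B)$ meet both $R_p$ and $R_q$, in the correct order and orientation? I expect this to be the crux. I would prove it by showing that the position of each cell relative to the arcs $A,B$ is itself dictated by the conowheel-plus-separation data inherited from the original order type, so that a separating line through $R_p$ and a separating line through $R_q$ each exist; a connectedness argument on the double wedge of separating transversals should then produce a single line meeting both cells, on which $p$ and $q$ can be placed with the prescribed orientations of the triples $(p,q,e_i)$.

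An alternative route I would keep in reserve is a pure deformation argument: start from any realization of the order type and move the extreme points onto a circle along a path in the (connected) space of convex polygons with the given cyclic order, dragging $p$ and $q$ continuously so that no orientation ever changes. The obstruction is again that a cell containing $p$ or $q$ may pinch off when three chords become concurrent, or that a vertex may cross the line $pq$; since each such event is codimension one and only the two specific cells and the single separation need to be protected, a generic path together with a generic choice of the final positions on the circle should avoid all of them. Either way, the essential difficulty is the simultaneous placement of the two interior points on one inscribed hull with the correct mutual separation, and this is where I would concentrate the argument.
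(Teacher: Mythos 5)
Your reduction and bookkeeping are sound: the order type is indeed determined by the cyclic order on $E$, the two conowheel types $C_p,C_q$ (equivalently the cells $R_p,R_q$ of the chord arrangement), and the signs of the triples $(p,q,e_i)$, and the cases of $0$ or $1$ interior points are handled correctly. But the two steps you yourself flag as the crux are genuine gaps, not merely details. First, the claim that some single inscribed polygon with the given cyclic order has \emph{both} cells $R_p$ and $R_q$ nonempty is exactly where the content of the theorem lives, and your genericity argument does not establish it: the locus where three specified chords are concurrent is codimension one, but the locus where a given cell is \emph{absent} is a full-dimensional region bounded by such walls. A generic path from an inscribed polygon exhibiting $C_p$ to one exhibiting $C_q$ crosses walls transversally, and crossing a wall is precisely the event that destroys a cell; genericity only protects you from codimension-two events. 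You would need to show that the ``good'' region (both cells present) is nonempty, e.g.\ by showing the walls killing $R_p$ and those killing $R_q$ cannot jointly separate the two one-cell regions, and no such argument is given. Second, even granting both cells on one circle, the existence of a line meeting $R_p$ and $R_q$ in the right order while separating $\conv(A)$ from $\conv(B)$ is asserted to follow from ``connectedness of the double wedge'' but never derived from the hypothesis that the order type is realizable at all; as written this is a hope, not a proof.

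For contrast, the paper avoids the simultaneous-placement problem entirely by inverting the order of construction: it fixes two distinguished points $L,R$ on a horizontal line covering the interior points \emph{first}, encodes the whole order type as a pair of monotone staircase functions $(\psi_L,\psi_R)$ recording, for each point above the line, how many points below it lie to the right of $\overline{Lp_i}$ and $\overline{Rp_i}$ (Proposition \ref{prop_function_determines_ot}), and then builds the inscribed realization greedily point by point on the two semicircles using the projections of lower points through $L$ and $R$ (Theorem \ref{theorem_two_functions_inscribable}). If you want to salvage your route, the place to concentrate effort is a direct proof that the set of inscribed convex positions realizing a fixed conowheel cell is ``large'' enough (say, contains a common witness for any two cells that co-occur in some planar realization); absent that, the argument does not close.
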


We prove Theorem \ref{theorem_two_interior_inscribable} by giving an explicit construction. 
%in the proof of Theorem \ref{theorem_two_functions_inscribable}. 
%Even though we fix specific interior points in the construction, these points can be arbitrary, 
Note that any two points inside the circle can be chosen as the interior points for this construction.
%see also Remark \ref{remark_convex_curve}. 

\begin{prop} \label{prop_(5,3)_inscribable}
Every simple order type with at most 5 extreme points is inscribable.
\end{prop}

We would like to have a nice characterization of inscribable order types.
We may hope for a finite list of obstructions to inscribability, 
like Kuratowski's characterization of planar graphs. 
Similarly, many classes of graphs can be characterized by a finite list of forbidden minors \cite{robertson2004graph}, and this extends to many classes of matroids as well \cite{tutte1958homotopy}. 

Matroids are purely combinatorial objects defined by axioms coming from properties of vector configurations in linear algebra like rank, span, and linear independence.  Oriented matroids are matroids with additional sign information corresponding to positively or negatively oriented bases, and in particular, the orientations of each tipple of points of an order type defines an oriented matroid.  However, not all matroids or oriented matroids can be realized by a vector configuration or affine point set \cite{om_book}.  

Matroids have a natural partial ordering defined by the operations of deletion and contraction, and when two matroids are related, the lower one in the poset is called a minor of the higher one.    
The class of matroids that are realizable over every field has been characterized by a finite list of forbidden minors, for example.  That is, the class of such matroids has only finitely many members that are minimal in this partial ordering, which are the obstructions to realizability in all fields \cite{tutte1958homotopy}.  On the other hand, there is no such characterization for matroids that are realizable over just the reals \cite{vamos1978missing}.

To distinguish one obstruction to inscribability from another, we need a way of ordering order types and an associated notion of minimality. 
Order types can be ordered by containment, but that is not the right ordering here, since inscribable order types are not closed under deletion. 
For example, a triangle with the non-Pascal configuration in its interior is inscribable.  
From this example, the problem with containment becomes clear; the extreme points play a special role, but additional points can become extreme points as a result of deletion. 
In this way, we are really considering a property of an order type with a distinguished subset of elements.  The appropriate ordering then is containment for both the order type and the distinguished subset.  
For $B\subseteq P \subset \mb{R}^2$, we say the pair $(P,B)$ is \df{inscribable} when there is a realization of $P$ in the unit disk such that all the points among $B$ are realized on the unit circle.  Note that we do not require $B$ to be a set of extreme points of $P$, but this must be the case for $(P,B)$ to possibly be inscribable. 
We say $P$ is \df{minimally uninscribable} when $P$ is not inscribable, but $(P',B')$ is always inscribable provided that $P'$ is a proper subset of $P$ and $B'\subseteq P'$ is a subset of the extreme points of $P$.  

The non-Pascal configuration is not the only obstruction to inscribability. 
Indeed we construct an infinite family of distinct obstructions.

\begin{theorem} \label{theorem_main_minimally_uninscribable}
There are infinitely many minimally uninscribable order types.  
\end{theorem}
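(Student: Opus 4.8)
The plan is to construct an explicit sequence of order types $P_m$ indexed by $m \geq 6$, where $P_m$ has $m$ extreme points together with a bounded number of interior points and generalizes the non-Pascal configuration of Example \ref{eg_Non_Pascal}, and then to prove for each $m$ that $P_m$ is uninscribable and minimally so. Distinctness is then automatic: since the $P_m$ have pairwise distinct numbers of extreme points they are non-isomorphic as order types, so establishing both properties for every $m$ yields infinitely many minimally uninscribable order types. The design constraint that will shape everything is that the obstruction must involve \emph{all} the extreme points in an essential, chained way, since otherwise deleting an extreme point not participating in the obstruction would leave it intact and violate minimality.

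For uninscribability the main engine is the action of the Möbius automorphisms of the disk. Suppose $P_m$ were inscribed, with the extreme points on the unit circle and interior points inside. I would first apply a disk automorphism $z \mapsto e^{i\varphi}\frac{z-a}{1-\bar a z}$ carrying a distinguished interior point to the origin; this preserves the unit circle, so the configuration remains inscribed and the cyclic order of the extreme points is unchanged. The payoff is that, with an interior point $q$ at the origin and $p_i = (\cos\theta_i,\sin\theta_i)$, the orientation of $(q,p_i,p_j)$ equals $\sign(\sin(\theta_j-\theta_i))$, so the prescribed orientations translate into a system of angular inequalities of the form $\theta_j-\theta_i<\pi$ or $\theta_j-\theta_i>\pi$ on the positions of the extreme points. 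I would engineer $P_m$ so that these inequalities, together with the orientations among the remaining interior points, are jointly infeasible; the cleanest route is to lay out the extreme points as the successive ``opposite sides'' of an inscribed polygon and invoke Pascal's theorem (or a suitable Cayley--Bacharach-type generalization for larger $m$), exactly as in Example \ref{eg_Non_Pascal}, so that concyclicity forces a collinearity incompatible with the orientation of an interior triple.

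For minimality I would first reduce to single-point deletions. If $(P_m\setminus\{x\},\,\operatorname{ext}(P_m)\setminus\{x\})$ is inscribable for every $x\in P_m$, then any proper subset $P'\subsetneq P_m$ with any $B'\subseteq\operatorname{ext}(P_m)\cap P'$ is inscribable: choosing $x\in P_m\setminus P'$ and restricting the inscribed realization of $P_m\setminus\{x\}$ to $P'$, every point of $B'\subseteq\operatorname{ext}(P_m)\setminus\{x\}$ stays on the circle, and points extreme in $P_m$ remain extreme in the subset $P'$, so $B'\subseteq\operatorname{ext}(P')$ as the definition requires. It then remains to exhibit, for each deleted point, an explicit inscribed realization of the remainder. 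I would produce these from a single parametrized family of concyclic points, verifying that deleting exactly one point removes precisely the incidence obligation responsible for the obstruction and frees enough angular freedom to satisfy all surviving orientations.

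The main obstacle will be making the uninscribability argument uniform in $m$ while keeping the obstruction fragile. The Möbius centering reduces each instance to a finite incidence-and-inequality check, but I must arrange the chained construction so that a single Pascal- or Cayley--Bacharach-type contradiction persists for every $m$, yet is destroyed by the deletion of \emph{any} one point. Securing this robust-yet-fragile balance, and carrying out the accompanying case analysis for the explicit sub-realizations, is where the bulk of the design and verification effort will lie.
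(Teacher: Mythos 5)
Your reduction of minimality to single-point deletions is correct, and distinctness via differing numbers of extreme points is fine; but the heart of the theorem --- an explicit family together with an actual proof of uninscribability --- is not supplied, and the mechanism you propose to supply it does not exist in the form you need. Pascal's theorem is a statement about six points on a conic. For an inscribed $m$-gon with $m>6$ there is no analogous incidence theorem forcing a collinearity among intersections of sides: the classical generalizations (M\"obius's theorem on inscribed $(4n+2)$-gons, Cayley--Bacharach) either are conditional or assert that certain intersection points lie on a curve of degree $>1$, which is vacuous as an orientation constraint (e.g., the four intersections of opposite sides of an inscribed octagon always lie on a conic, as do any four points). So ``a suitable Cayley--Bacharach-type generalization'' cannot simply be invoked; you would need to discover a genuinely new obstruction, and that discovery is the entire content of the theorem. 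Two further problems: your M\"obius normalization sending one interior point to the origin linearizes only the orientations of triples containing that point, so the claimed reduction to angular inequalities does not capture the full order type; and keeping the number of interior points bounded over all $m$ would, by pigeonhole, yield infinitely many minimally uninscribable order types with some fixed number of interior points --- a statement the paper explicitly leaves as an open question, which should warn you that this route is much harder than sketched.

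For contrast, the paper's obstruction is dynamical rather than incidence-theoretic. For interior points $p_1,\dots,p_n$ in convex position it studies the composition $f=f_n\circ\cdots\circ f_1$ of the projections of $\mathbb{S}^1$ through the $p_k$; this is a M\"obius transformation which, after conjugation, acts as $x\mapsto rx$ with $r<1$, hence is hyperbolic with exactly two fixed points. Lemma \ref{LemmaTwoPolygons} deduces that at most two inscribed $n$-gons can thread one $p_k$ per edge, and that no further inscribed $n$-gon can contain the intersection of two such polygons. The configuration $P_n$ (with $3n$ extreme points $a_i,b_i,c_i$ and $n$ interior points $d_i$, so the number of interior points grows with $n$) is engineered so that an inscription would force exactly such a forbidden third polygon $\conv(b_1,\dots,b_n)$, and minimality is then checked by exhibiting inscribed realizations of $P_n\setminus\{a_1\}$, $P_n\setminus\{b_1\}$, and $P_n\setminus\{d_n\}$ as in Lemma \ref{lemmaOnePointMissing}. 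Your proposal would need to be rebuilt around an obstruction of this kind; as written, the uninscribability step has no working engine behind it.
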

%Or, for each $k$, there is a minimally uninscribable order type with $k$ interior points.

%\bigskip
%\hrule
%\bigskip

Our interest in inscribable order types originally stemmed from the problem of counting order types with relatively few interior points.
The orientation of an ordered triple $(p_1,p_2,p_3)$ can be expressed algebraically by
\[
\sign\left(\det\left[\begin{array}{ccc} p_1 & p_2 & p_3 \\ 1 & 1 & 1 \end{array}\right]\right) \in \{1,0,-1\}. 
\]
Goodman and Pollack used this fact to show an upper bound $2^{\Theta(n\log n)}$ on the number of simple labelled order types of size $n$ \cite{GP_upper_bound_ot}, which was later improved by Alon \cite{alon_upper_bound_ot}.  
Specifically, they used a theorem by Milnor and Thom from real algebraic geometry to bound the number of possible sign patterns of the determinants in terms of the number of variables and degree of the corresponding system of polynomials \cite{milnor-thom_by_milnor, milnor-thom_by_thom}  (see also \cite{warren}). 

Xavier Goaoc suggested in a personal communication that, for order types with a large number of extreme points, it may be possible to realize the extreme points on a fixed algebraic curve, and this could give a better bound on the number of such order types than the Goodman-Pollack and Alon bounds. Goaoc's suggestion led the authors to consider the following question.

\begin{Q} \label{questionFewInLowDeg}
For each $k$ is there an algebraic curve or zero-locus $\gamma_k$ such that every simple order type with at most $k$ interior points has a realization where all the extreme points are on $\gamma_k$? If so, then what is the smallest degree of such a set $\gamma_k$?
\end{Q}

For each nonnegative integer $k$, let $d_k$ be the minimum nonnegative integer such that for every simple order type $\omega$ with at most $k$ interior points there is a degree $d_k$ curve $\gamma$ such that $\omega$ has a realization where all the extreme points are on $\gamma$. If there are no such curves, then we set $d_k= \infty$. The following question is a weaker version of Question \ref{questionFewInLowDeg}.

\begin{Q} \label{questionFewInLowDeg_weaker}
Is $d_k$ finite for every nonnegative integer $k$? If so, how does $d_k$ grow?
\end{Q}

In Lemma \ref{lemma_uninscribable_circle_conic} we show that an order type is inscribable if and only if its extreme points can be realized on a conic, so in the special case of Question \ref{questionFewInLowDeg} for degree 2 curves, we may assume that $\gamma_k$ is a circle.
Hence, Theorem \ref{theorem_two_interior_inscribable} answers Questions \ref{questionFewInLowDeg} and \ref{questionFewInLowDeg_weaker} in the case where $k=2$; in particular $d_2=2$. Also the non-Pascal configuration shows that $d_k \geq 3$ when $k \geq 3$.

\medskip 

Simple point sets with at most one interior point are called \df{conowheels}. The number of such order types was counted 
by giving a 1-1 correspondence between the order types of conowheels of size $n+1$ and 
2-colored self-dual necklaces with $2n$ beads \cite{self_dual_necklace-brouwer, self_dual_necklace-palmer, one_point_pilz_welzl}  
\cite[Chapter 6.3]{grunbaum_polytope_book} which is asymptotically $\Theta(\frac{2^n}{n})$, see Theorem \ref{theorem_conowheel}.
Using this combinatorial characterization, it is not hard to show that conowheels are inscribable. 
Theorem \ref{theorem_two_interior_inscribable} extends this to simple order types with two interior points.

Combining proof ideas for Theorem \ref{theorem_two_interior_inscribable} with MacMahon's counting result on plane partitions (see \cite{macmahon_counting_book} and \cite[p.545]{handbook_enumerative_combinatorics}), we provide an estimate for the number of simple planar order types with at most 2 interior points which is $\Theta(\frac{4^n}{n^{3/2}})$, see Theorem \ref{theorem_counting_2_interior}.

\medskip

This manuscript is organized as follows. 
In Section \ref{section_construction}, we prove Theorem \ref{theorem_main_minimally_uninscribable} by constructing an infinite family of minimally uninscribable configurations. We additionally show that configurations in the family do not contain each other as subconfigurations. 
In Section \ref{Section_few_points}, we consider order types with few interior or extreme points, and prove Theorem \ref{theorem_two_interior_inscribable}
and Proposition \ref{prop_(5,3)_inscribable} along with Lemma \ref{lemma_uninscribable_circle_conic}.
We also asymptotically count the number of simple order types with at most 2 interior points. 
In Section \ref{section_concluding}, we give some remarks and suggest open problems. 

We use the following notation.
$[k] = \{1,\dots,k\}$. 
$\Line{ab}$ is the line through points $a$ and $b$. 
$[a,b]$ is the line segment with endpoints $a$ and $b$.
$\mb{S}^1$ is the unit circle in the complex plane, 
and $\mb{D}$ is the unit disk in the complex plane. 

%\section{Inscribable order types}
%\label{section_inscribable_eg}
%In this section, we show inscribability of simple order types with at most two interior points, and count the number of such order types asymptotically.  Also, we show inscribability when there are at most 5 extreme points.

\section{Minimally uninscribable order types} 
\label{section_construction}

In this section we first prove Theorem \ref{theorem_main_minimally_uninscribable} in Subsection \ref{subsection_k-gon} by explicitly constructing a sequence of minimally uninscribable order types $P_n$.  We then show in Subsection \ref{subsection_incomparable} that none of these order types are related by containment.  Finally, we present a more involved procedure in Subsection \ref{subsection_k-star} for constructing many more uninscribable order types.

\subsection{An infinite family of minimally uninscribable order types} \label{subsection_k-gon}

We first define the uninscribable point configurations $P_n$.
Fix $n \geq 3$, and 
let $a_1,c_1,\dots,a_n,c_n$ be $2n$ points spaced evenly around the unit circle in counterclockwise order.
Let $L_i$ be the directed line that bisects the angle from the directed line $\Line{a_ia_{i+1}}$ to $\Line{c_ic_{i+1}}$, and that is offset to the right from $\Line{a_ia_{i+1}} \cap \Line{c_ic_{i+1}}$ by some $\eps > 0$ sufficiently small that the points $c_{i}$ and $a_{i+1}$ are to still the right of $L_i$.
Let $b_i = L_i\cap L_{i-1}$, and let $d_i$ be a point in the triangle bounded by 
$L_i = \Line{b_ib_{i+1}}$, $\Line{a_ia_{i+1}}$, and $\Line{c_ic_{i+1}}$.
Note that we treat indices modulo $n$, so in particular $L_0 = L_n$. 
Let $P_n = \{a_i,b_i,c_i,d_i: i \in [n]\}$; see Figure \ref{figPn}.

\begin{figure}[ht]
\begin{center}
\begin{tikzpicture}[scale=0.7,rotate = 90]
\def\r{.07}
\def\s{5.3}
\def\t{3.4}
\def\u{2.4}
\draw[fill]
(0:\s) circle (\r) --
(120:\s) circle (\r) --
(240:\s) circle (\r) -- (0:\s)
(30:\t) circle (\r) --
(150:\t) circle (\r) --
(270:\t) circle (\r) -- (30:\t)
(90:\t) circle (\r) --
(210:\t) circle (\r) --
(330:\t) circle (\r) -- (90:\t)
(60:\u) circle (\r)
(180:\u) circle (\r)
(300:\u) circle (\r)
;
\def\s{5.7}
\def\t{3.8}
\def\u{3}
\path 
(330:\t) node {$a_1$}
(90:\t)  node {$a_2$}
(210:\t) node {$a_3$}
(0:\s)   node {$b_1$}
(120:\s) node {$b_2$}
(240:\s) node {$b_3$}
(30:\t)  node {$c_1$}
(150:\t) node {$c_2$}
(270:\t) node {$c_3$}
(60:\u)  node {$d_1$}
(180:\u) node {$d_2$}
(300:\u) node {$d_3$}
%(68:\u)  node {$d_1$}
%(188:\u) node {$d_2$}
%(308:\u) node {$d_3$}
; 

\end{tikzpicture}
\caption{The configuration $P_3$.}
\label{figPn}
\end{center}
\end{figure}
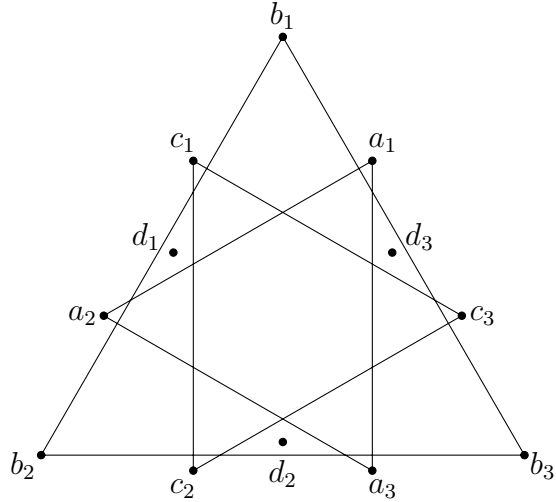

We will show that $P_n$ is uninscribable using Möbius transformations of the unit circle in the complex plane $\mb{S}^1 \subset \mb{C}$.
In general a Möbius transformation of the complex plane is a map of the form
\[ f(z) = \frac{az+b}{cz+d}. \] 
Note that a Möbius transformation is a bijection of the one point compactification of the complex plane $\overline{\mb{C}}$, 
and that a Möbius transformation is determined by the image of 3 points. 

Here we will mainly use Möbius transformations of the unit circle, i.e., transformations $\phi:\mb{S}^1 \to \mb{S}^1$.
These transformations are of the form 
\[ \phi(z) = \phi_{a,\theta}(z) = e^{\im\theta}\frac{z-a}{\overline{a}z-1}, \] 
for $a \in \mb{D}^\circ = \{a \in \mb{C} : |a|<1\}$ and $\theta \in \mb{R}$. 
In the case where $\theta = 0$ (or $\theta = 2\pi\mb{Z}$), $\phi$ is the map that projects a point on $\mb{S}^1$ through the point $a$ to $\mb{S}^1$.  That is, $[z,\phi_{a,0}(z)]$ is a segment through the point $a$.  In general, for $a,\theta$ fixed, the collection of segments $[z,\phi_{a,\theta}(z)]$ are tangent to an ellipse in $\mb{D}$, which we will call the \df{Frantz ellipse} of the map $\phi$ \cite{frantz2004conics}.  In the case where $\theta = 0$, we regard the point $a$ as a degenerate ellipse.  
Note that if $\phi$ has a fixed point, then the Frantz ellipse will be tangent to the unit circle at that fixed point.

We first give a simpler proof of uninscribability in the case $n=3$.  This is not needed to understand the general case, but is of interest as an alternate proof, and may provide the reader insight that will be helpful for understanding the general case.

\begin{figure}[ht]
\centering
\includegraphics{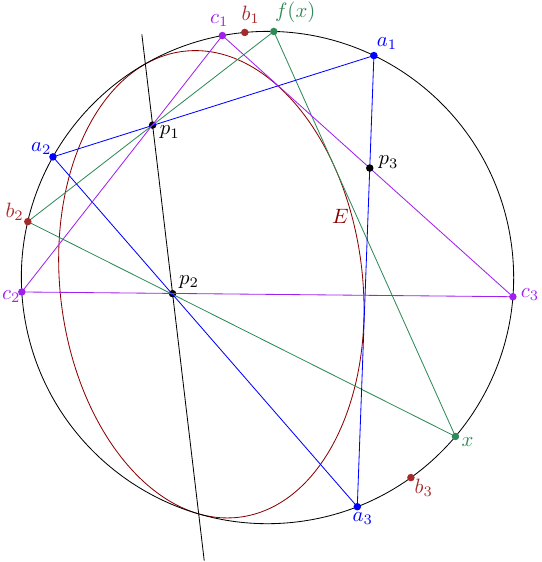}
\caption{We use the Frantz ellipse to show that $P_3$ is not inscribable.}
\label{figP3}
\end{figure}

\begin{theorem}\label{theoremP3Uninscribable}
$P_3$ is uninscribable.
\end{theorem}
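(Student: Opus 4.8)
The plan is to show that $P_3$ cannot be inscribed by deriving a contradiction from the assumed existence of an inscribed realization, using the Frantz ellipse associated to a suitable Möbius transformation. First I would suppose, for contradiction, that $P_3$ has an inscribed realization; after applying a projective or Möbius normalization I may assume the extreme points $a_1,b_1,c_1,\dots,a_3,b_3,c_3$ lie on $\mb{S}^1$ in their prescribed counterclockwise cyclic order, while the three interior points $d_1,d_2,d_3$ sit inside $\mb{D}^\circ$ with their defining orientation constraints intact. The essential combinatorial data I must track is which side of each line $\Line{a_ia_{i+1}}$ and $\Line{c_ic_{i+1}}$ the point $d_i$ lies on, since the location of each $d_i$ is pinned down (up to the sign constraints) by the triangle bounded by these two chords and the bisecting line $L_i$.

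Next I would build a single Möbius transformation $\phi = \phi_{a,\theta}$ adapted to the configuration, chosen so that $\phi$ sends the arc endpoints of one family of chords to those of the other — concretely, a map carrying the chord $\Line{a_ia_{i+1}}$ data to the $\Line{c_ic_{i+1}}$ data in a cyclically consistent way. The key geometric fact I would invoke is that for fixed $a,\theta$ the segments $[z,\phi_{a,\theta}(z)]$ are all tangent to the Frantz ellipse of $\phi$. Thus if the three relevant chords are realized as images under a common $\phi$, they are simultaneously tangent to one ellipse sitting inside $\mb{D}$, and the interior points $d_i$, being forced into the thin triangular slivers cut off near these chords by the offset bisector $L_i$, would be constrained to lie on one side of this ellipse. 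I would then extract from the three tangency conditions a cyclic inequality — an inequality that the angular positions of $a_1,c_1,\dots,a_3,c_3$ on the circle would have to satisfy — and show it is incompatible with the positive orientation forced on the triple $(d_1,d_2,d_3)$.

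The crux is to make the Frantz-ellipse tangency argument yield a clean global contradiction rather than a merely local one. I expect the main obstacle to be verifying that a single Möbius map $\phi$ can indeed be fitted to all three chord pairs at once: one must show that the three constraints "$\Line{a_ia_{i+1}}$ and $\Line{c_ic_{i+1}}$ are related by $\phi$" over $i \in [3]$ are mutually consistent, which is where the specific symmetric placement of the $2n$ points evenly around the circle (here $n=3$) is exploited. Once consistency is established, the contradiction should follow because the common Frantz ellipse forces the three near-chord regions containing $d_1,d_2,d_3$ to wind around the ellipse in a way that reverses the orientation of $(d_1,d_2,d_3)$, exactly parallel to how Pascal's theorem reverses the orientation in Example \ref{eg_Non_Pascal}. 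The remaining steps — checking that the normalization preserves the order type, and that the $\eps$-offset of each $L_i$ does not spoil the sign of the tangency inequality — I expect to be routine continuity and genericity arguments.
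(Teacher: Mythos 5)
Your overall strategy (assume an inscribed realization, produce a M\"obius transformation, and use tangency to its Frantz ellipse to derive an orientation contradiction) is the right family of ideas, but the central step of your plan does not go through. You propose to fit a \emph{single} M\"obius transformation $\phi=\phi_{a,\theta}$ so that the three chords $\Line{a_ia_{i+1}}$ and the three chords $\Line{c_ic_{i+1}}$ are simultaneously chords of the form $[z,\phi(z)]$, hence all tangent to one Frantz ellipse. That would require $\phi(a_i)=a_{i+1}$ and $\phi(c_i)=c_{i+1}$ for $i\in[3]$, i.e.\ six conditions on a three-real-parameter family of disk automorphisms; this is overdetermined and has no solution for a generic inscribed realization. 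Your proposed rescue --- exploiting ``the specific symmetric placement of the $2n$ points evenly around the circle'' --- is not available: the even spacing is only a feature of the particular configuration used to \emph{define} the order type $P_3$. A hypothetical inscribed realization is only required to reproduce the orientations of triples, so you have no control over where on $\mb{S}^1$ the extreme points land beyond their cyclic order, and a M\"obius normalization (three real degrees of freedom) cannot move nine extreme points into symmetric position. For the same reason the lines $L_i$ and the ``thin triangular slivers'' are not present in the realization; the only constraints on the $d_i$ are sidedness with respect to lines spanned by pairs of points of $P_3$.

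The paper's proof sidesteps the consistency problem entirely: it sets $p_k=[a_ka_{k+1}]\cap[c_kc_{k+1}]$, lets $f_k$ be the projection of $\mb{S}^1$ through $p_k$, and takes $f=f_1\circ f_2$. This composition is automatically a M\"obius transformation, and its chord family $\{[z,f(z)]\}$ automatically contains $[a_3,a_1]$ and $[c_3,c_1]$ (since $f(a_3)=a_1$ and $f(c_3)=c_1$), plus the auxiliary chord $[x,f(x)]$ with $x=f_2(b_2)$ and $f(x)=f_1(b_2)$. The interior points $d_1,d_2$ force $[x,f(x)]$ to the right of the directed chord $[b_3,b_1]$; the ordering of the three tangent chords around the Frantz ellipse then forces $p_3$ to the right of $[b_3,b_1]$, while $d_3$ forces $p_3$ to its left --- that is the contradiction. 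To salvage your write-up, replace the search for a globally consistent $\phi$ with this composition of two projections; the rest of your outline (tracking which side of each chord the $d_i$ lie on) then becomes the finite case check the paper carries out.
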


\begin{proof}
Suppose $P_3$ were inscribable and consider an inscribed realization.  
Let $p_k = [a_ka_{k+1}]\cap[c_kc_{k+1}]$ with indices mod 3, 
and let $f_k : \mb{S}^1 \to \mb{S}^1$ be the projection through $p_k$, 
and $f = f_1\circ f_2$. 
Note that $f$ and the $f_k$ are all Möbius transformations.
Let $E$ be the Frantz ellipse of $f$, 
and let $x = f_2(b_2)$; see Figure \ref{figP3}.
Since $f_2$ is an involution, $f(x) = f_1(b_2)$. 
%The points $d_2$ and $d_1$ respectively force $x$ and $f(x)$ to be on the counterclockwise arc from $b_3$ to $b_1$. 

By the construction of $P_3$, the point $d_2$ is to the left of the line $L_2 = \Line{b_2b_{3}}$ directed from $b_2$ to $b_3$,  
and $p_2$ is a vertex of the triangle bounded by the lines $L_2$, $\Line{a_2a_{3}}$, and $\Line{c_2c_{3}}$, which contains $d_2$, 
so the point $p_2$ must also be on the left of $L_2$. 
Also, the triple $(b_2,c_2,c_3)$ is positively oriented, so 
the triple $(b_2,p_2,c_3)$ is positively oriented, 
so the points 
$b_3$, $p_2$, $c_3$ appear in that order counterclockwise around the point $b_2$. 

Similarly, the point $d_1$ forces $p_1$ to be to the left of the line $L_1 = \Line{b_1b_{2}}$ directed from $b_1$ to $b_2$, which together with the above and the fact that $(b_2,c_3,a_1)$ is positively oriented, implies that the points 
$b_3$, $p_2$, $c_3$, $a_1$, $p_1$, $b_1$ appear in that order counterclockwise around the point $b_2$.
Therefore, the points 
$b_3$, $x$, $f(x)$, $b_1$ appear in that order counterclockwise around the point $b_2$, 
so $x$ and $f(x)$ are both to the right of the directed chord $[b_3,b_1]$ from $b_3$ to $b_1$

Also, $[b_3,b_1]$ is to the right of the directed chord $[a_3,c_1]$, so is $[x,f(x)]$ to the right of $[a_3,c_1]$. 
Since $a_1=f(a_3)$ and $c_1 = f(c_3)$, 
the chords $[a_3,a_1]$, $[x,f(x)]$, and $[c_3,c_1]$ are each tangent to $E$ in that order counterclockwise, and all three chords are to the right of $[a_3,c_1]$, so $p_3$ must be to the right of the directed chord $[x,f(x)]$, which implies that $p_3$ must also be to the right of $[b_3,b_1]$, but the point $d_3$ forces $p_3$ to be to the left of $[b_3,b_1]$, which is a contradiction.
\end{proof}

We now prove two lemmas, which we will need for Theorem \ref{theorem_main_minimally_uninscribable}.

\begin{lemma}\label{LemmaTwoPolygons}
Let $p_k$ for $k \in \{1,\dots,n\}$ be $n \geq 3$ points inside the unit circle in convex position. 
Then, there are at most 2 inscribed $n$-gons with one of the points $p_k$ on each edge.   
Moreover, if $G_\mathrm{a}$ and $G_\mathrm{c}$ are two such $n$-gons, then $G_\mathrm{a}$ and $ G_\mathrm{c}$ are the only inscribed $n$-gons that contain $G_\mathrm{a}\cap G_\mathrm{c}$. 
\end{lemma}

\begin{proof}[Proof of Lemma \ref{LemmaTwoPolygons}]
Let us assume without loss of generality that the points $p_k$ are ordered counterclockwise. 
Hence, the edge of $G_\mathrm{a}$ through $p_k$ is adjacent to the edges through $p_{k+1}$ and $p_{k-1}$ where indices are regarded mod $n$, so in particular $p_{n+1} = p_1$; see Figure \ref{figureACP}.   

\begin{figure}[ht]
\begin{center}
 \includegraphics{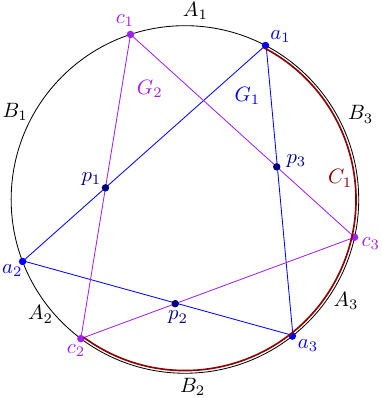}
 \caption{The circle divided into arcs by the polygons $G_1$ and $G_2$.}
\label{figureACP}
\end{center}
\end{figure}

Let $a_{k}$ and $c_k$ be the respective vertices of $G_\mathrm{a}$ and $G_\mathrm{c}$ adjacent to the edges through $p_k$ and $p_{k-1}$.  Let us also assume without loss of generality that $c_1$ appears on the counterclockwise arc from $a_1$ to $a_2$.  By induction, this implies that $c_k$ appears on the counterclockwise arc from $a_k$ to $a_{k+1}$, since the edge $[c_k,c_{k+1}]$ crosses the edge $[a_{k},a_{k+1}]$ at $p_{k}$.  

\begin{figure}[ht]
\centering
\includegraphics{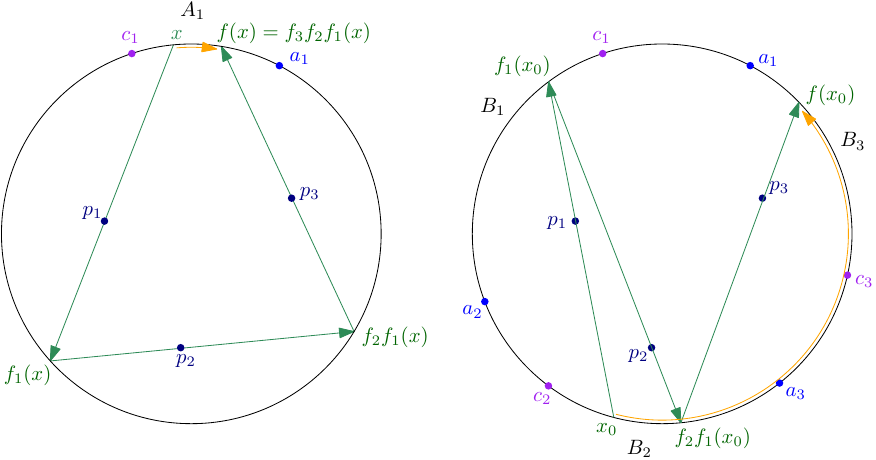}
\caption{The maps $f_k$ and the point $x_0$.}
\label{figureF}
\end{figure}

Let $f_k : \mb{S}^1 \to \mb{S}^1$ be the projection through $p_k$, and let $f = f_n\circ \cdots \circ f_1$; see Figure \ref{figureF}.  
Let $\overline{\mb{R}}$ be the one point compactification of $\mb{R}$, and 
let $g : \mb{S}^1 \to \overline{\mb{R}}$ be the restriction to the unit circle of the Möbius transformation where $g(a_1) = 0$, $g(c_1) = \infty$, and $g(x_1) = 1$ where $x_1$ is the midpoint of the counterclockwise arc along $\mb{S}^1$ from $a_1$ to $c_1$.  
Let $h = g \circ f \circ g^{-1} : \overline{\mb{R}} \to \overline{\mb{R}}$.
Each $f_k$ is a Möbius transformation, so $h$ is a Möbius transformation. 
Also, $h(0) = 0$, $h(\infty)=\infty$, and $h$ preserves the upper half-plane. 
Hence, $h(x) = r x$ for some $r > 0$. 
\begin{claim}\label{claimRSmall}
$r  < 1$. 
\end{claim}

Let $A_k$ be the counterclockwise arc from $a_k$ to $c_k$, and let $B_k$ be the counterclockwise arc from $c_k$ to $a_{k+1}$, and let $C_k = \mb{S}^1 \setminus \overline{A_k\cup B_k\cup A_{k+1}}$
be the counterclockwise arc from $c_{k+1}$ to $a_{k}$; see Figure \ref{figureACP}. 
Observe that $f_k$ sends $A_k$ to $A_{k+1}$ and vice-versa, and sends $B_k$ to $C_{k}$ and vice-versa.    
Observe also that $g$ sends $A_1$ to the positive reals, and sends the counterclockwise arc from $c_1$ to $a_1$ to the negative reals. 

Observe that $p_2$ is separated from $B_1$ by the chords $[a_1,a_2]$ and $[c_1,c_2]$, which meet at $p_1$.  
Let $x_0$ be a point on the counterclockwise arc $B_2 \subset C_1$ from $c_2$ to $a_3$. 
Then, $f_1(x_0) \in B_1 \subset C_2$,
and by induction $f_k\cdots f_1(x_0) \in B_k \subset C_{k+1}$.  
In particular, $f(x_0) = f_n\cdots f_1(x_0) \in B_n$, which is counterclockwise of $x_0 \in B_1$ along the counterclockwise arc from $c_1$ to $a_1$. 
Hence, $g(x_0) < g(f(x_0)) < 0$, so 
$g(x_0) < h(g(x_0)) = r g(x_0) < 0$, so Claim \ref{claimRSmall} holds, i.e., $r < 1$.  
 
Suppose for the sake of contradiction that there is an inscribed $n$-gon $G_\mathrm{x}$ distinct from $G_\mathrm{a}$ and $G_\mathrm{c}$ with one of the points $p_k$ on each edge.  Let $x_1$ be the vertex of $G_\mathrm{x}$ adjacent to the edges through $p_1$ and $p_n$.  Then, $x_2 = f_1(x_1), x_3 = f_2(x_2),\dots, x_1 = f_n(x_n) = f(x_1)$, but since $r \neq 1$, the only fixed points of $f$ are $a_1$ and $c_1$, so $f(x_1) \neq x_1$, which is a contradiction.

\smallskip

Next, we show the second part of the lemma.

Suppose for the sake of contradiction that there is an inscribed $n$-gon $G_\mathrm{b}$ that contains $G_\mathrm{a} \cap G_\mathrm{c}$. 
We may assume without loss of generality that $G_\mathrm{b}$ has a vertex $b_1$ in the half open arc $A_1\setminus \{a_1\}$;
otherwise we may relabel the vertices of accordingly.
Let $b_1,b_2,\dots,b_n$ be the vertices of $G_\mathrm{b}$ in counterclockwise order.
We will show that, with this labeling, $G_\mathrm{b}$ must be $G_\mathrm{c}$.

For $x \in \mb{S}^1$, let $\theta(x) \in [0,2\pi)$ such that $x=e^{\im\theta(x)}$, and let us also assume without loss of generality that $\theta(b_1)=0$. 
Otherwise, we could rotate the configuration by $-\theta(b_1)$.
Then, $\theta(b_1)<\theta(b_2)<\dots<\theta(b_n)<2\pi$.

Let $L(\theta)$ be the directed chord extending from $e^{\im\theta}$ that is tangent to $G_\mathrm{a} \cap G_\mathrm{c}$ on the left side of $L(\theta)$, and let $\nu(\theta) \in (\theta,\theta+2\pi)$ such that $e^{\im\nu(\theta)}$ is the other point where $L(\theta)$ meets the unit circle.
Let $\vis(\theta)$ be the set of angles of the points on the unit circle that are visible from $e^{\im\theta}$ in $\mb{D}\setminus (G_\mathrm{a} \cap G_\mathrm{c})^\circ$.
That is, $\psi \in \vis(\theta) \subset (\theta-2\pi,\theta+2\pi)$ when the chords $[e^{\im\theta},e^{\im\tau}]$ are disjoint from $(G_\mathrm{a} \cap G_\mathrm{c})^\circ$ for $\tau$ in $[\theta,\psi]$ or $[\psi,\theta]$.  Note that $\vis(\theta)$ and $\nu(\theta)$ are not restricted to $[0,2\pi)$. 

\begin{remark}\label{remarkNuInc}
If $\theta < \psi$, then $\nu(\theta) < \nu(\psi)$.
\end{remark}

\begin{remark}\label{remarkNuVis}
$\nu(\theta) = \max(\vis(\theta))$.
\end{remark}

Since $b_1$ is separated from $G_\mathrm{a}$ by $\overline{a_1a_2}$, which passes through $p_1$, the segment $[b_1,p_1]$ is disjoint from $G_\mathrm{a}^\circ$.  
Either $b_1 \in A_1^\circ$, in which case $f_1(b_1) \in A_2^\circ$, or $b_1 = c_1$, in which case $f_1(b_1)=c_2$.
In the first case, $f_1(b_1)$ is separated from $G_\mathrm{c}$ by $\overline{c_1c_2}$, 
and in the second case, $f_1(b_1)$ is on the line $\overline{c_1c_2}$.
In either case, $[f_1(b_1),p_1]$ is disjoint from $G_\mathrm{c}^\circ$.
Hence, the directed chord $[b_1,f_1(b_1)] = [b_1,p_1]\cup[p_1,f_1(b_1)]$ passes $G_\mathrm{a} \cap G_\mathrm{c}$ tangentially on the left, 
so $[b_1,f_1(b_1)] = L(\theta(b_1))$ and $\theta f_1(b_1) = \nu\theta(b_1)$.
Likewise, 
$[f_{k-1}\cdots f_1(b_1),f_{k}\cdots f_1(b_1)]_\mb{C} = L(\nu^{k-1}\theta(b_1))$
and $\nu^k\theta(b_1) = \theta(f_k\cdots f_1(b_1))$ 
by induction.

Assuming $\nu^{k-1}\theta(b_1) \geq \theta(b_{k})$ by induction, we have $\nu^{k}\theta(b_1) \geq \nu\theta(b_{k})$ by Remark \ref{remarkNuInc}.
Since $b_{k+1}$ is visible from $b_{k}$ in $\mb{D}\setminus (G_\mathrm{a} \cap G_\mathrm{c})^\circ$, we have $\nu\theta(b_{k}) \geq \theta(b_{k+1})$ by Remark \ref{remarkNuVis}, 
so $\nu^{k}\theta(b_1) \geq \theta(b_{k+1})$.  
Furthermore, $\nu^{k}\theta(b_1) = \theta(b_{k+1})$ only if $b_{k+1} = f_k(b_{k}) = f_k\dots f_1(b_1)$ by induction.

Since $r < 1$ by Claim \ref{claimRSmall}, the point $f(b_1) = e^{\im\nu^n\theta(b_1)}$ is 
either clockwise of $b_1$ on $A_1$ in the case where $b_1 \in A_1^\circ$, 
or $f(b_1)=b_1$ in the case where $b_1 = c_1$.
Hence, $\nu^n\theta(b_1) \leq 2\pi$ with $\nu^n\theta(b_1) = 2\pi$ only if $b_1 = c_1$.

Since $b_1$ is visible from $b_n$, we have $\nu\theta(b_n) \geq 2\pi$.  Hence, $\nu\theta(b_n) \geq 2\pi \geq \nu^{n}\theta(b_1) \geq \nu\theta(b_n)$, so we must have equality in each case, which implies that $b_1=c_1$ and $b_k = f_{k-1}(b_{k-1}) = f_{k-1}(c_{k-1}) = c_k$ for each $k \in \{1,\dots,n\}$, so $G_\mathrm{b} = G_\mathrm{c}$. 
\end{proof}

\begin{figure}[ht]
\centering
\includegraphics{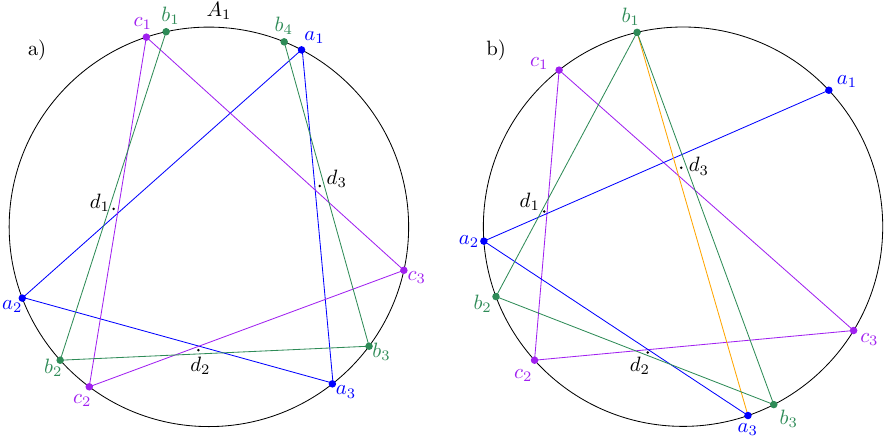}
\caption{(a) An inscribed realization of $P_3\setminus\{b_1\}$. (b) an inscribed realization of $P_3\setminus\{a_1\}$ and $P_3\setminus\{d_n\}$}
\label{figNoabd}
\end{figure}

\begin{lemma}\label{lemmaOnePointMissing}
$P_n\setminus\{a_1\}$, $P_n\setminus\{b_1\}$, and $P_n\setminus\{d_n\}$ are each inscribable.
\end{lemma}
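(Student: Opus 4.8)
The plan is to give, for each of the three configurations, an explicit inscribed realization built on a common skeleton. In each case I place the extreme points $a_i,b_i,c_i$ of the remaining configuration on $\mb{S}^1$ in their cyclic order $a_1,b_1,c_1,\dots,a_n,b_n,c_n$ (with $a_1$, respectively $b_1$, omitted in the first two cases, and $d_n$ omitted in the third), and keep the interior points $d_i$ inside. Since three points on a circle are positively oriented exactly when they occur counterclockwise, every orientation among the points $a_i,b_i,c_i$ is then forced by this cyclic order and already agrees with $P_n$; the only orientations left to arrange are those involving the $d_i$. Writing $p_i=\Line{a_ia_{i+1}}\cap\Line{c_ic_{i+1}}$ as in the proof of Theorem \ref{theoremP3Uninscribable}, the point $d_i$ must lie in the triangle that the chord $[b_i,b_{i+1}]$ cuts off from the wedge at $p_i$ bounded by $\Line{a_ia_{i+1}}$ and $\Line{c_ic_{i+1}}$. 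Hence the single binding requirement is that each \emph{present} edge $[b_i,b_{i+1}]$ pass strictly on the far side of $p_i$, leaving a nonempty triangle for $d_i$.

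Recall from the proof of Lemma \ref{LemmaTwoPolygons} that a chord of $\mb{S}^1$ passes through $p_i$ exactly when its endpoints are interchanged by the projection $f_i$, that $f_i(a_i)=a_{i+1}$ and $f_i(c_i)=c_{i+1}$, and that the composite $f=f_n\circ\cdots\circ f_1$ has only the two fixed points $a_1$ and $c_1$, contracting toward one of them with ratio $r<1$ by Claim \ref{claimRSmall}. Demanding that all $n$ of the edges $[b_i,b_{i+1}]$ lie strictly on the far side of their $p_i$ forces the $b$-vertices to over-rotate past a full turn, and this is precisely the obstruction that makes the undeleted $P_n$ uninscribable. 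The point of the lemma is that deleting a single point removes exactly one instance of this requirement.

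For $P_n\setminus\{d_n\}$ the edge $[b_n,b_1]$ no longer needs a triangle. I keep $a_1,c_1,\dots,a_n,c_n$ evenly spaced, which fixes all $p_i$, choose $b_1$ on the arc from $a_1$ to $c_1$, and define $b_{i+1}$ to be a small perturbation of $f_i(b_i)$ on the far side of $p_i$ for $i=1,\dots,n-1$. Because $f_{n-1}\circ\cdots\circ f_1$ carries the arc from $a_1$ to $c_1$ onto the arc from $a_n$ to $c_n$, for sufficiently small perturbations $b_n$ lands on the arc from $a_n$ to $c_n$, so $b_1,\dots,b_n$ are in convex position and the now unconstrained chord $[b_n,b_1]$ closes the polygon with the correct crossings against $\Line{a_na_1}$ and $\Line{c_nc_1}$. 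For $P_n\setminus\{a_1\}$ the corners $p_1$ and $p_n$ cease to exist, since each needs the chord through the deleted vertex $a_1$; this frees both edges meeting $b_1$, and the same propagate-and-close construction applies with even more slack. For $P_n\setminus\{b_1\}$ the $b$-vertices form an open path $b_2,\dots,b_n$ with two free ends, and again the chain lands in the correct arcs. In every case I finish by placing each $d_i$ in its (possibly one-sided) region.

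The main obstacle is the bookkeeping in the construction: I must choose the perturbations small enough that every $b_i$ stays strictly inside its arc from $a_i$ to $c_i$, that the $b$-polygon (or $b$-path) stays convex, and that each crossing between the $b$-edges and the $a$- and $c$-chords is exactly as in $P_n$, so that the resulting order type is precisely that of the deletion rather than merely a similar one. The conceptual point, made precise by the contraction $r<1$ of Claim \ref{claimRSmall}, is that forcing all $n$ edges onto the far side is what overshoots a full turn; deleting a point frees an edge or opens the $b$-path, after which the remaining vertices land in their arcs and the configuration closes automatically. Once the triangles and one-sided regions for the $d_i$ are nonempty, placing the $d_i$ and verifying the remaining orientations is routine.
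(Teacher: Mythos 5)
Your proposal is correct and follows essentially the same route as the paper: fix $a_1,c_1,\dots,a_n,c_n$ on the circle, generate the $b$-chain by $b_{k+1}=$ a small perturbation of $f_k(b_k)$ pushed to the far side of $p_k$, observe that the deletion frees exactly one edge (or opens the $b$-path) so the chain need not close, and then place each $d_k$ in the resulting nonempty region. The paper's only notable extra device is that it handles $P_n\setminus\{a_1\}$ and $P_n\setminus\{d_n\}$ with a single auxiliary configuration (choosing $b_1$ close to $a_1$ and a special four-sided region for $d_n$), but your separate treatments achieve the same thing and both arguments defer the final orientation check as routine.
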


\begin{proof}
Let points $a_k$ and $c_k$ be as in the construction of $P_n$, and let $p_k$ and $f_k$ be as in the proof of Lemma \ref{LemmaTwoPolygons}. 

Let us start with $P_n\setminus\{b_1\}$; see Figure \ref{figNoabd} (a).
Let $b_1$ be a point on the counterclockwise arc $A_1$ from $a_1$ to $c_1$, and let $b_k = e^{-\im\eps}f_{k-1}(b_{k-1})$ for $k \in \{2,\dots,n+1\}$ with $\eps>0$ sufficiently small that $b_k$ is still on the counterclockwise arc $A_k$ from $a_k$ to $c_k$.
Note that $p_k$ is to the left of $\overline{b_{k}b_{k+1}}$, 
and let $d_k$ be in the triangle to the left of $\Line{b_{k}b_{k+1}}$, $\Line{a_{k+1}a_{k}}$, and $\Line{c_{k+1}c_{k}}$ for $k \in \{1,\dots,n\}$ with the identifications $a_{n+1}=a_1$ and $c_{n+1}=c_1$, but with $b_1$ and $b_{n+1}$ distinct points.
It is tedious but straightforward to check that this configuration with $b_1$ and $b_{n+1}$ removed is an inscribed realization of $P_n\setminus\{b_1\}$. 

Next, we inscribe $P_n\setminus\{a_1\}$ and $P_n\setminus\{d_n\}$; see Figure \ref{figNoabd} (b). 
Let $b_1$ be a point on the arc $A_1$ that is sufficiently close to $a_1$ that $\Line{a_nb_1}$ intersects $\Line{a_1a_2}$ to the right of $\Line{c_nc_1}$.
Let $b_k = e^{-\im\eps}f_{k-1}(b_{k-1})$ for $k \in \{2,\dots,n\}$ with $\eps>0$ sufficiently small that $b_k \in A_k$.
Let $d_k$ be in the triangle to the left of $\Line{b_{k}b_{k+1}}$, $\Line{a_{k+1}a_{k}}$, and $\Line{c_{k+1}c_{k}}$ for $k \in \{1,\dots,n-1\}$. 
Since the points $b_1,c_1,a_n,b_n,c_n$ appear counterclockwise around the circle in that order, there is a triangular region to the left of $\Line{b_nb_1}$, $\Line{b_1a_n}$, and $\Line{c_1c_n}$.
By our choice of $b_1$ sufficiently close to $a_1$, the line $\Line{a_1a_2}$ subdivides this triangular region.
Let $d_n$ be in the region to the left of $\Line{b_nb_1}$, $\Line{a_1a_2}$, $\Line{b_1a_n}$, and $\Line{c_1c_n}$. 
This configuration with $a_1$ removed is an inscribed realization of $P_n\setminus\{a_1\}$. 
Additionally, this configuration with $d_n$ removed is an inscribed realization of $P_n\setminus\{d_n\}$. \end{proof}

\begin{proof}[Proof of Theorem \ref{theorem_main_minimally_uninscribable}]
If $P_n$ were inscribable, then for the inscribed realization, $G_\mathrm{b} = \conv(b_1,\dots,b_n)$ would be an inscribed $n$-gon containing the intersection of the analogous inscribed polygons $G_\mathrm{a}\cap G_\mathrm{c}$ whose vertices alternate around the circle, which would contradict Lemma \ref{LemmaTwoPolygons}.  Hence, $P_n$ is uninscribable.

Let $B_n$ be the set of extreme points of $P_n$.
To verify that $P_n$ is minimally uninscribable, it is enough to check that $(P_n,B_n)$ with one vertex removed is an inscribable pair.  By symmetry, we may assume that the point we removed is either $a_1$, $b_1$, or $d_n$.  Thus, $P_n$ is minimally uninscribable by Lemma \ref{lemmaOnePointMissing}.
\end{proof}

\subsection{Incomparability with respect to containment}\label{subsection_incomparable}

Since being minimally uninscribable is not defined in terms of containment of order types, Theorem \ref{theorem_main_minimally_uninscribable} does not immediately imply that none of the uninscribable order types we constructed appears as a subconfiguration of another.  This is, however, the case.

\begin{theorem}
$P_k$ is not a subconfiguration of $P_n$ for $k<n$.
\end{theorem}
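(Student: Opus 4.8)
The plan is to first pin down the convex position data of $P_n$ and then reduce the problem to a single troublesome case. A direct inspection of the construction shows that the extreme points of $P_n$ are exactly $\{a_i,b_i,c_i : i\in[n]\}$, occurring around the convex hull in the $3$-periodic cyclic order $a_1,b_1,c_1,a_2,b_2,c_2,\dots,a_n,b_n,c_n$ with the spikes $b_i$ sticking out between $a_i$ and $c_i$, while the interior points are exactly the $d_i$; in particular $P_n$ has $3n$ extreme and $n$ interior points, arranged in two convex layers. Moreover, each $d_i$ lies in the triangle cut off by the three block-to-block chords $\overline{a_ia_{i+1}}$, $\overline{c_ic_{i+1}}$, and $L_i=\overline{b_ib_{i+1}}$, so it sits deep in the valley between the spikes $b_i$ and $b_{i+1}$, inside the convex hull of its neighboring extreme points. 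I would record this as the combinatorial fingerprint of the family: the hull is $3$-periodic with one interior point assigned to each of the $n$ consecutive valleys.

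Now suppose $S\subseteq P_n$ realizes the order type of $P_k$ with $k<n$. Then $S$ is a \emph{proper} subset of $P_n$, it has $3k$ extreme and $k$ interior points, and since a point that is extreme in $P_n$ stays extreme in every subset, the interior points of $S$ form a $k$-element subset of $\{d_1,\dots,d_n\}$. The argument splits on whether any interior point of $P_n$ is promoted to an extreme point of $S$.

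In the first case, $\mathrm{ext}(S)\subseteq\mathrm{ext}(P_n)$. Taking $B'=\mathrm{ext}(S)$, which is then a subset of the extreme points of $P_n$, minimal uninscribability of $P_n$ (Theorem \ref{theorem_main_minimally_uninscribable}) gives that the pair $(S,\mathrm{ext}(S))$ is inscribable, i.e.\ the order type of $S$ is inscribable. But that order type is $P_k$, which is uninscribable: the proof of Theorem \ref{theorem_main_minimally_uninscribable} via Lemma \ref{LemmaTwoPolygons} applies verbatim for every index $\geq 3$. This contradiction rules out the first case. It is exactly here that Theorem \ref{theorem_main_minimally_uninscribable} fails to settle the statement on its own, as advertised before the theorem: minimality only controls pairs whose marked set lies inside $\mathrm{ext}(P_n)$, so it says nothing once deleting points creates new extreme points.

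This leaves the second case, which I expect to be the crux: some $d_i$ is an extreme point of $S$. Since $d_i$ lies inside the convex hull of the surrounding extreme points of blocks $i$ and $i+1$, exposing it forces several of those points to be absent from $S$, which both lowers the number of available hull vertices and destroys the local valley structure around blocks $i$ and $i+1$. The plan is to show that a promoted $d_i$ cannot fit the $3$-periodic fingerprint of $P_k$: it can be neither a spike vertex (there is no interior point of $S$ tucked into a valley bounded by a chord through $d_i$) nor an ordinary $a$- or $c$-type hull vertex (its surviving neighbors in $S$ are not a spike-flanked pair enclosing an interior point in the way every such vertex of $P_k$ is). Making this precise amounts to a finite local analysis of the orientations among $d_i$ and the surviving points of the two adjacent blocks, combined with a counting check that the deletions needed to expose the promoted interior points leave too few extreme points to assemble the $k$ valleys and the $3k$ hull positions of $P_k$. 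Carrying out this case analysis, and thereby verifying that no assignment of the surviving points to the prescribed roles in $P_k$ is consistent with the orientations forced by the construction, is where the real work lies and is the step I expect to be the main obstacle.
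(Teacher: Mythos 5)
Your Case~1 is correct and is a nice use of minimal uninscribability: if every extreme point of $S$ is already extreme in $P_n$, then $(S,\mathrm{ext}(S))$ is an inscribable pair by Theorem~\ref{theorem_main_minimally_uninscribable}, so the order type of $S$ would be inscribable, yet that order type is $P_k$, which is uninscribable for every $k\geq 3$ by Lemma~\ref{LemmaTwoPolygons}. The problem is Case~2, where some $d_i$ is promoted to a hull vertex of $S$. There you do not give a proof; you describe a plan (``a finite local analysis \dots\ combined with a counting check'') and explicitly flag it as the main obstacle. As written, nothing rules out that deleting many points of $P_n$ exposes several $d_i$ in a way that happens to reproduce the $3$-periodic spike/valley pattern of $P_k$: the count of surviving hull positions is never carried out, and the orientation analysis around a promoted $d_i$ is not done. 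So the argument is incomplete at exactly the step you identify as the crux.

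The paper avoids the case split altogether with one combinatorial lemma: among the points of $P_n$ there is no set of $m$ points whose convex hull contains $m$ other points of $P_n$, for $3\leq m<n$. In $P_k$ the $k$ points $b_1,\dots,b_k$ contain the $k$ points $d_1,\dots,d_k$ in their convex hull, and convex-hull containment is determined by the order type, so any order-type-preserving injection of $P_k$ into $P_n$ would produce exactly such a forbidden pair of $k$-sets inside $P_n$ --- regardless of which points of $P_n$ they land on, so no extreme/interior bookkeeping is needed. The lemma itself is proved by showing that the inner $m$ points must be certain $d_{i_j}$'s, that the outer $m$ points are then forced to be the $b$'s with indices $i_j+1$, and that consequently the indices $i_j$ advance by exactly $1$ at each step, forcing $m\equiv 0\pmod n$ and hence $m=n$. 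If you want to salvage your approach, proving this containment lemma directly is the cleanest route; it is strictly stronger than what you need and makes both of your cases unnecessary.
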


The theorem follows directly from the next lemma. 

\begin{lemma}
Among the points of $P_n$, there is no set of $m$ points such the convex hull contains $m$ other points for $n > m \geq 3$. 
\end{lemma}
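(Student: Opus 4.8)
The plan is to prove the quantitative statement that for every $S \subseteq P_n$ with $|S| = m$ and $3 \le m \le n-1$, the number of points of $P_n\setminus S$ in the interior of $\conv(S)$ is at most $m-1$, hence strictly less than $m$. This is exactly the lemma, and it yields the theorem: a copy of $P_k$ inside $P_n$ would supply $k$ points (the images of the $b_i$) whose convex hull contains the $k$ images of the $d_i$, and with $3\le k<n$ this contradicts the lemma applied with $m=k$.

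First I would pin down the convex structure of $P_n$. For $\eps$ sufficiently small, each $a_i$ and $c_i$ lies strictly outside the chords $\Line{b_{i-1}b_i}$ and $\Line{b_ib_{i+1}}$ (a short computation: in the limit $\eps=0$ the point $b_i$ is the intersection of the tangents to $\mb{S}^1$ at $a_i$ and $c_i$, so $a_i,c_i$ remain vertices of the hull), so the vertices of $\conv(P_n)$ are exactly the $a_i,b_i,c_i$ and the only interior points are the $d_i$. Since a vertex of $\conv(P_n)$ can never lie in the interior of $\conv(S)$ for $S\subseteq P_n$, every captured point is some $d_i$. Writing $D=\{i: d_i\in\conv(S)^\circ\}$, it therefore suffices to show $|D|\le m-1$.

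Next I would establish the key local obstruction. By construction $d_i$ lies strictly on the far side (away from the centre) of both lines $\Line{a_ia_{i+1}}$ and $\Line{c_ic_{i+1}}$; hence if $d_i\in\conv(S)^\circ$, then $S$ must contain a point strictly beyond each of these two lines. The crux is the computation — using that the $a_i,c_i$ are equally spaced on $\mb{S}^1$ and that $b_i$ is, up to $O(\eps)$, the tangent intersection described above — that the only points of $P_n$ strictly beyond $\Line{a_ia_{i+1}}$ are $\{b_i,c_i,d_i\}$, and the only ones strictly beyond $\Line{c_ic_{i+1}}$ are $\{a_{i+1},b_{i+1},d_i\}$. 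All the relevant inequalities are strict in the limit $\eps=0$ (the only borderline incidences are $a_i,a_{i+1}\in\Line{a_ia_{i+1}}$, which the offset does not move), so they persist for small $\eps$. Consequently, for each $i\in D$ we obtain $S\cap\{b_i,c_i\}\neq\emptyset$ and $S\cap\{a_{i+1},b_{i+1}\}\neq\emptyset$. I expect this determination of which points lie beyond the two chords to be the main obstacle, since it is the only place where the precise geometry of the construction is used; everything else is combinatorial.

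Finally I would run a token count. For each $i\in D$ choose a \emph{left token} $\ell_i\in S\cap\{b_i,c_i\}$. Because the pairs $\{b_i,c_i\}$ are pairwise disjoint, $i\mapsto\ell_i$ is an injection $D\hookrightarrow S$, so $|D|\le m$. Suppose for contradiction that $|D|=m$; then this injection is a bijection, so $S=\{\ell_i:i\in D\}\subseteq\bigcup_i\{b_i,c_i\}$ and in particular $S$ contains no $a$-point. The \emph{right token} condition then forces $b_{i+1}\in S$ for every $i\in D$, and since $b_{i+1}\in\{b_j,c_j\}$ only for $j=i+1$, this gives $i+1\in D$. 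Thus $D$ is closed under the successor map on $\mathbb{Z}/n$, so $D=\mathbb{Z}/n$ and $|D|=n$, contradicting $m<n$. Hence $|D|\le m-1<m$, which completes the proof.
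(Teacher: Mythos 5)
Your proof is correct, and while it rests on the same geometric kernel as the paper's, it is organized differently enough to count as a distinct route. Both arguments reduce to the facts that the captured points must be $d_i$'s and that the witnesses forcing $d_i$ into $\conv(S)$ must come from $\{b_i,c_i\}$ on one side and $\{a_{i+1},b_{i+1}\}$ on the other, and both end with the same wrap-around contradiction: all witnesses are forced to be $b$'s with cyclically consecutive indices, which would require $n>m$ of them. The difference is in how those witness sets are extracted and counted. The paper orders the captured points, draws the lines $\Line{p_jp_{j+1}}$ through consecutive pairs, shows the $m$ outer regions are pairwise disjoint so each contains exactly one point of $S$, and then analyzes which chords joining consecutive regions keep $p_j$ on the left. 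You instead argue locally at each captured $d_i$ via the construction chords $\Line{a_ia_{i+1}}$ and $\Line{c_ic_{i+1}}$: since $d_i$ lies strictly beyond both, $S$ must meet $\{b_i,c_i\}$ and $\{a_{i+1},b_{i+1}\}$, and the disjointness of the pairs $\{b_i,c_i\}$ gives $|D|\le m$ by a bare injection, with the equality case eliminated by successor-closure. This buys a cleaner combinatorial finish (no ordering of the $d_i$'s, no visibility analysis between consecutive regions) at the price of the ``beyond'' computation, which does check out: with $\delta=\pi/n$, a unit-circle point lies strictly beyond $\Line{a_ia_{i+1}}$ iff its angular distance from the chord's midpoint direction is less than $\delta$, which selects only $c_i$, while $|b_j|\approx 1/\cos(\delta/2)$ and $|d_j|\approx\cos\delta/\cos(\delta/2)$ put exactly $b_i$ and $d_i$, and no other $b_j$ or $d_j$, over the line. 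Two small things to tidy: define $D$ as the index set of the \emph{captured} points so that $d_i\notin S$ is built in (your left-token step needs this to exclude $d_i$ itself as the witness), and note that at $\eps=0$ the incidence $d_i=p_i\in\Line{a_ia_{i+1}}$ is also borderline, not only $a_i,a_{i+1}$, though it resolves in your favor for $\eps>0$ because $d_i$ lies in the open triangle.
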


\begin{proof}
Suppose the lemma fails and let $q_1,\dots,q_m$ be points among $P_n$ with $m$ other points in their convex hull, which we denote by $p_1,\dots,p_m$. 
Since the points $a_i,b_i,c_i$ are all on the convex boundary of $P_n$, the $p_j$ cannot be among these, so me must have $p_j = d_{i_j}$.
Let $i_{j} < i_{j+1}$ so that these points are in counterclockwise order.

We cannot have all the points $q_i$ on the same side of $\overline{p_jp_{j+1}}$.
Furthermore, none of the points of $P_n$ except $d_j$ are in the cone $d_j +\cone(d_j-d_{j-1},d_j-d_{j+1})$. 
Consequently, none of the point of $P_n$ that are to the right of $\overline{p_jp_{j+1}}$ are also to the right of $\overline{p_{j-1}p_{j}}$.  
Moreover, for each pair of distinct $j,k$, there are no points to the right of both $\overline{p_jp_{j+1}}$ and $\overline{p_kp_{k+1}}$. 
Hence, for each $j$ there is only one unique point among the $q_i$ that is to the right of $\overline{p_jp_{j+1}}$, which we will denote by $q_j$.
To have each point $p_j$ in the convex hull of the points $q_i$, we must have that each $p_j$ is to the left of $\overline{q_{j-1}q_j}$.

\begin{figure}[ht]
\centering
\includegraphics[scale=0.8]{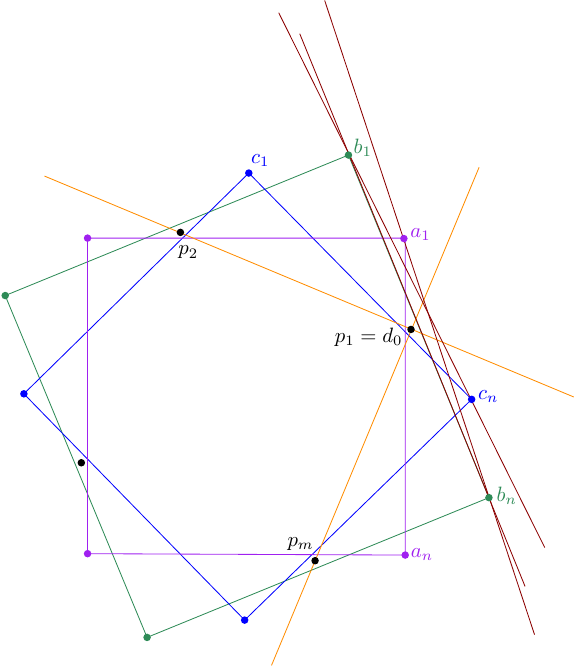}
\caption{The only lines from a point that is to the right of $\overline{p_mp_1}$ to a point that is to the right of $\overline{p_1p_2}$ with the point $p_1$ on the left are the lines $\overline{b_nb_1}$, $\overline{b_na_1}$, and $\overline{c_nb_1}$.  Hence, $q_m \in \{b_n,c_n\}$ and $q_1 \in \{a_1,b_1\}$.}
\label{figLinesLeftOfP1}
\end{figure}

To order the indices of the points of $P_n$ in a convenient way, 
let us relabel $d_n$ as $d_0$, and 
let us assume that $i_1 = 0$ and $p_1= d_0$; otherwise we may reindex the points of $P_n$ appropriately. 
Then, $i_m > i_2$, and $q_m$ is to the right of $\overline{p_mp_1}$, so $q_m$ is among the points $a_i,b_i,c_i$ for $i > i_m$, and $q_1$ is to the right of $\overline{p_1p_2}$, so $q_1$ is among the points $a_i,b_i,c_i$ for $i \leq i_2$; see Figure \ref{figLinesLeftOfP1}.  In particular, $q_m$ is counterclockwise from $q_2$ on the counterclockwise arc of the convex boundary of $P_n$ from $a_1$ to $c_n$. 
If $q_m$ were among points $a_{i_m},b_{i_m},\dots,a_n$, then $p_1 = d_0 = d_n$ would be to the right of $\overline{q_mq_1}$, since $d_n$ is to the right of $\overline{a_na_1}$ and $q_1$ is on the counterclockwise arc of the the convex boundary of $P_n$ from $a_1$ to $q_m$.
Hence, $q_m \in \{b_n,c_n\}$ and similarly $q_1 \in \{a_1,b_1\}$. 
Likewise, $q_{j-1} \in \{b_{i_j},c_{i_j}\}$ and $q_{j} \in \{a_{i_j+1},b_{i_j+1}\}$ for each $j \in \{1,\dots,m\}$ where indices $j$ of $q_j$ are considered mod $m$ and indices $i$ of $a_i,b_i,c_i$ are considered mod $n$. 
Therefore, each $q_j \in \{a_{i_j+1},b_{i_j+1}\} \cap \{b_{i_{j+1}},c_{i_{j+1}}\}$, so $q_j = b_j$ and $i_{j+1} = i_j+1$ mod $n$, so $i_2 = 1$, $i_3 = 2$,$\dots$, $i_m = m-1$, and $i_1 = m$ mod $n$, but we already have $i_1 = 0$ and $n > m$, so no such points $q_1,\dots,q_m$ can be found among $P_n$.
\end{proof}

\subsection{More uninscribable order types}\label{subsection_k-star}
In this section, we generalize the uninscribable point configuration $P_n$. Before doing so,  let us recall the construction of $P_n$. The essential property which makes $P_n$ uninscribable is that
\begin{itemize}
    \item[(S)] the orientations of triples $(a_{k-1}, a_k, a_{k+1})$, $(b_{k-1}, b_k, b_{k+1})$ and $(c_{k-1}, c_k, c_{k+1})$ are all positive, that is, the triples are ordered counterclockwise, and
    
    \item[(L)] the point $d_k$ is chosen in the triangle formed by $L_k$, $\Line{a_ka_{k+1}}$ and $\Line{c_kc_{k+1}}$ where the orientation of the triple $(b_k,b_{k+1},d_k)$ is positive, that is, $d_k$ is on the left to the oriented line $\Line{b_kb_{k+1}}$
\end{itemize}
 for every $k\in [n]$ where $k-1, k$ and $k+1$ are taken in modulo $n$. Using these properties, we define a broader collection of point configurations as follows.
 
\smallskip 
 
Let $a_1, b_1, c_1, \dots, a_n, b_n, c_n$ be the points in convex position which are in the same order as in $P_n$. For a bijection $\sigma : [n] \to [n]$, let $\mc{F}_\sigma$ be the family of configurations $\{a_k, b_k, c_k, d_k: k\in [n]\}$ that satisfies the following. 
\begin{itemize}
\item[(L$'$)] $d_{\sigma(k)}$ is chosen in the triangle formed by $\Line{a_{\sigma(k)}a_{\sigma(k+1)}}$, $\Line{b_{\sigma(k)}b_{\sigma(k+1)}}$ and $\Line{c_{\sigma(k)}c_{\sigma(k+1)}}$ where the orientation of the triple $(b_{\sigma(k)},b_{\sigma(k+1)},d_{\sigma(k)})$ is positive for every $k\in [n]$. 
 \end{itemize}
In particular, a configuration in $\mc{F}_\sigma$ is called a \df{star configuration} if $\sigma$ satisfies the following.
\begin{itemize}
    \item[(S$'$)] Each consecutive triple defines a permutation of positive sign, that is, $\sign(\sigma(k),\sigma(k+1),\sigma(k+2)) := (-1)^\iota = 1$ for each $k \in [n]$ with addition mod $n$ 
where $\iota = |\{\{i,j\}\subset \{k,k+1,k+2\} : i<j, \sigma(i)>\sigma(j)\}|$ is the number of inversions.
\end{itemize}
 Note that $P_n$ is a star configuration in  $\mc{F}_{\id_n}$ where $\id_n : [n] \to [n]$ is the identity. 

\smallskip
The proof of Lemma \ref{LemmaTwoPolygons} similarly applies to star configurations with a few modifications with respect to $\sigma$. To be more precise, we define a function $f_k^\sigma$ similarly with $f_k$ as the projection with respect to $p_{\sigma(k)}$, where $p_{\sigma(k)}$ are points chosen inside the circle. As the proof  assumed that $G_a$ and $G_c$ contain $p_k$ on each edge, we similarly impose the condition that, $a_1, c_1, \dots, a_n, c_n$ are ordered counterclockwise on the circle $\mb{S}^1$, and $p_{\sigma(k)}$
is the intersection point of the lines $\Line{a_{\sigma(k)}a_{\sigma(k+1)}}$ and $\Line{c_{\sigma(k)}c_{\sigma(k+1)}}$. Then, we define $g^\sigma:\overline{\mb{C}}\to \overline{\mb{C}}$ as the linear fractional transformation such that $g^{\sigma}(a_{\sigma(1)})=0$, $g^{\sigma}(c_{\sigma(1)})=\infty$, and $g^{\sigma}(x_{\sigma(1)})=1$ where $x_{\sigma(1)}$ is the midpoint of the counterclockwise arc from $a_{\sigma(1)}$ to $c_{\sigma(1)}$. Finally, let $f^\sigma=f^\sigma_{\sigma(n)}\circ\cdots \circ f^\sigma_{\sigma(1)}$ and $h^\sigma =g^\sigma \circ f^\sigma \circ (g^\sigma)^{-1}$.

The first part, with a slightly different choice of the initial point at the counterclockwise arc from $c_{\sigma(2)}$ to $a_{\sigma(n)}$, similarly holds because of (S$'$). The second part should be interpreted that whenever
we rotate and move each line $b_{\sigma(k)}b_{\sigma(k+1)}$ appropriately so that the $p_{\sigma(k)}$ is on $b_{\sigma(k)}b_{\sigma(k+1)}$, the final point on the circle where we arrive by following a chain of the resulting lines starting from $b_{\sigma(1)}$ should be located counterclockwise of $b_{\sigma(1)}$ in the counterclockwise arc from $a_{\sigma(1)}$ to $c_{\sigma(1)}$, which leads to a contradiction with the first part and yields the conclusion. This holds  because of (L$'$). Therefore, we can conclude that a star configuration is uninscribable, and by Lemma \ref{lemma_uninscribable_circle_conic}, we have the following.
\begin{theorem}\label{theorem_k-star}
No star configurations can be realized with the extreme points on a conic.
\end{theorem}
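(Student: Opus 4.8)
The plan is to establish the stronger statement that every star configuration is \emph{uninscribable} and then apply Lemma~\ref{lemma_uninscribable_circle_conic} to upgrade this to the impossibility of placing the extreme points on an arbitrary conic. Because the points $d_k$ lie in the interior, the extreme points of a star configuration are precisely the $a_k,b_k,c_k$, so uninscribability means that no realization puts all of $a_1,\dots,a_n,b_1,\dots,b_n,c_1,\dots,c_n$ on a common circle. Thus the whole task reduces to transporting the contradiction that proved Theorem~\ref{theorem_main_minimally_uninscribable} from the identity permutation to an arbitrary $\sigma$ satisfying (S$'$), by re-running the proof of Lemma~\ref{LemmaTwoPolygons} with the maps reindexed by $\sigma$.

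First I would assume, toward a contradiction, an inscribed realization and reconstruct the conjugated dynamical system: place $a_1,c_1,\dots,a_n,c_n$ counterclockwise on $\mb{S}^1$, set $p_{\sigma(k)}=\overline{a_{\sigma(k)}a_{\sigma(k+1)}}\cap\overline{c_{\sigma(k)}c_{\sigma(k+1)}}$, let $f^\sigma_k$ be the projection through $p_{\sigma(k)}$, and form $f^\sigma=f^\sigma_{\sigma(n)}\circ\cdots\circ f^\sigma_{\sigma(1)}$. Conjugating by the Möbius map $g^\sigma$ that sends $a_{\sigma(1)}\mapsto 0$, $c_{\sigma(1)}\mapsto\infty$, and the arc-midpoint $x_{\sigma(1)}\mapsto 1$ produces $h^\sigma=g^\sigma\circ f^\sigma\circ(g^\sigma)^{-1}$, which fixes $0$ and $\infty$ and preserves the upper half-plane, hence has the form $h^\sigma(x)=rx$ for some $r>0$, exactly as in Lemma~\ref{LemmaTwoPolygons}.

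The first half of the argument is the analog of Claim~\ref{claimRSmall}, namely $r<1$, and this is where (S$'$) enters. Condition (S$'$) says each consecutive triple $\sigma(k),\sigma(k+1),\sigma(k+2)$ has positive sign, which geometrically means that $a_{\sigma(k)},a_{\sigma(k+1)},a_{\sigma(k+2)}$ (and likewise the $c$'s) occur in counterclockwise order on the circle, mirroring (S). I would take an initial test point on the counterclockwise arc from $c_{\sigma(2)}$ to $a_{\sigma(n)}$ and track its images under the successive projections; (S$'$) guarantees that each $f^\sigma_k$ advances the running point strictly counterclockwise, so that after all $n$ projections the net displacement is a strict counterclockwise rotation, giving $r<1$. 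The second half is the analog of the part of Lemma~\ref{LemmaTwoPolygons} that precludes a further inscribed polygon containing $G_\mathrm{a}\cap G_\mathrm{c}$: the vertices $b_{\sigma(1)},\dots,b_{\sigma(n)}$, together with the requirement from (L$'$) that each $p_{\sigma(k)}$ lie to the left of $\overline{b_{\sigma(k)}b_{\sigma(k+1)}}$, force the chain of chords through the $p_{\sigma(k)}$ starting at $b_{\sigma(1)}$ to return to a point strictly counterclockwise of $b_{\sigma(1)}$ on the arc from $a_{\sigma(1)}$ to $c_{\sigma(1)}$. This is incompatible with $r<1$, which forces the return point to be clockwise of (or equal to) $b_{\sigma(1)}$. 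The contradiction shows that no inscribed realization exists, and Lemma~\ref{lemma_uninscribable_circle_conic} then yields the theorem.

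The step I expect to be the main obstacle is verifying that (S$'$) really forces $r<1$. For the identity permutation the running point sweeps the nested arcs $A_k,B_k,C_k$ in their natural cyclic order, so its monotone counterclockwise advance is essentially visible from the picture; once $\sigma$ reshuffles the edges $\overline{a_{\sigma(k)}a_{\sigma(k+1)}}$ around the circle this monotonicity must be re-proved one projection at a time. The delicate bookkeeping is the analog of the inductive claim $f_k\cdots f_1(x_0)\in B_k$ in Lemma~\ref{LemmaTwoPolygons}: one must check that the positivity of each consecutive triple keeps every intermediate image on the correct arc and moving in the correct direction, and it is here that a fully rigorous write-up would need to spend most of its effort.
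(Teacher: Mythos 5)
Your proposal follows essentially the same route as the paper: reduce to uninscribability via Lemma~\ref{lemma_uninscribable_circle_conic}, rerun Lemma~\ref{LemmaTwoPolygons} with the projections reindexed by $\sigma$, obtain $h^\sigma(x)=rx$ with $r<1$ from (S$'$) using a test point on the arc from $c_{\sigma(2)}$ to $a_{\sigma(n)}$, and derive the contradiction from the chain of $b$-chords via (L$'$). The level of detail (and the acknowledged gap in verifying the $\sigma$-indexed arc bookkeeping) matches the paper's own sketch, so there is nothing substantive to add.
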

It does not look like Theorem \ref{theorem_k-star} can be easily generalized for $\mc{F}_\sigma$ with arbitrary $\sigma$.

\section{Order types with few interior or extreme points} \label{Section_few_points}

In this section, we show that an order type $\omega$ is inscribable if $\omega$ has at most 2 interior points (Theorem \ref{theorem_two_interior_inscribable}) or has at most 5 extreme points (Proposition \ref{prop_(5,3)_inscribable}).  In Subsection \ref{sectionFewExtremePoints} we briefly discuss the case with few extreme points and prove Proposition \ref{prop_(5,3)_inscribable} and Lemma \ref{lemma_uninscribable_circle_conic}.
We prove Theorem \ref{theorem_two_interior_inscribable} in Subsection \ref{subsection_two_interior}. As a consequence of Theorem \ref{theorem_two_interior_inscribable} and its proof, we obtain an estimate of the number of simple order types with two interior points in Subsection \ref{subsection_counting}.

\subsection{Simple order types with at most 5 extreme points}
\label{sectionFewExtremePoints}

\label{subsection 3 interior in petagon}
In this subsection, we consider order types with at most 5 extreme points. The proof of the following lemma uses a suitable projective transformation. For a detailed explanation on projective transformations and their effects on conics, one can consult \cite{projective_geometry_book}.

\begin{lemma} \label{lemma_uninscribable_circle_conic}
	If an order type $\omega$ is uninscribable, then $\omega$ cannot be realized as a point set where the extreme points are chosen from a conic.
\end{lemma}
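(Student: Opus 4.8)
The plan is to prove the contrapositive: if $\omega$ is realized by a point set $P$ whose extreme points lie on a conic $\mathcal{C}$, then $\omega$ is inscribable. The engine is a single projective transformation carrying $\mathcal{C}$ to a circle while preserving the order type of $P$. Recall the standard fact (see \cite{projective_geometry_book}) that a projective transformation preserves the orientation of every triple of $P$ up to a global sign as long as the line it sends to the line at infinity is disjoint from $\conv(P)$; the global sign can be absorbed by a reflection, which maps circles to circles. Recall also that a projective transformation sends $\mathcal{C}$ to a bounded ellipse precisely when the line it sends to infinity misses $\mathcal{C}$ entirely, i.e. is an \emph{exterior} line of $\mathcal{C}$. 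Thus it suffices to produce a line $H$ disjoint from both $\mathcal{C}$ and $\conv(P)$: sending $H$ to infinity makes $\mathcal{C}$ an ellipse, and a final affine map (which preserves order type and carries ellipses to circles) inscribes $\omega$.

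To find such an $H$, I would pass to the real projective plane $\mb{RP}^2$, where a nondegenerate conic $\mathcal{C}$ is an oval bounding a disk $\Delta$ (its interior) and a Möbius band $M$ (its exterior). Any line disjoint from $\mathcal{C}$ is connected and hence lies entirely in $M$, since a projective line is non-contractible and so cannot be swallowed by the disk $\Delta$; consequently every exterior line avoids the closed interior $\bar\Delta$. The crux is therefore the geometric claim that $\conv(P) \subseteq \bar\Delta$, for then any exterior line $H$ (these exist for every real conic) is automatically disjoint from $\conv(P)$, and we are done. This containment holds whenever the extreme points lie on a single convex arc of $\mathcal{C}$: the hull of points on a convex arc lies on the concave side of that arc, which is exactly the closed interior $\bar\Delta$. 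In particular it covers every ellipse, every parabola, and every hyperbola all of whose extreme points lie on one branch, and hence every configuration with at least five extreme points (as explained below).

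The main obstacle is the remaining case, where $\mathcal{C}$ is a hyperbola with extreme points on both branches (and, similarly, a degenerate conic, a union of at most two lines). Here $\conv(P)$ pokes through the central region into $M$, no exterior line avoids it, and the projective argument breaks down. I would dispose of this case by a counting observation: if both branches carry extreme points, then each branch carries at most two. Indeed, a would-be third, middle vertex $v$ on a branch lies on the origin-side of the chord through its two neighbors, since the branch is convex, while the vertices on the opposite branch force the hull to that same side; a supporting line of $\conv(P)$ at $v$ would then have to keep the neighbors on the far side and the opposite-branch vertices on the near side, which is impossible. Hence such configurations have at most four extreme points (likewise for degenerate conics, since no three extreme points are collinear). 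With at most four extreme points the problem becomes elementary and, crucially, does not invoke Proposition \ref{prop_(5,3)_inscribable}: a triangle already sits on its circumcircle, and any four points in convex position lie on some ellipse through them, so in either case one places the extreme points on an ellipse containing $\conv(P)$ and applies the affine map to a circle. Combining the single-arc case with this few-extreme-points case proves the lemma.
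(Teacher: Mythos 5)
Your proof is correct and follows essentially the same route as the paper's: handle the case where the extreme points lie on a single convex arc of the conic by a projective transformation, and observe that a hyperbola carrying extreme points on both branches admits at most four extreme points, which are then treated directly. The only real divergence is in that last step, where you place four extreme points in convex position on an ellipse directly rather than (as the paper does) adjoining a fifth point and rerunning the conic argument; both work, and your version of the projective step is in fact more carefully justified (via a line at infinity missing both the conic and $\conv(P)$) than the paper's.
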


\begin{proof}
	For a contradiction, suppose that $\omega$ is realized as a point set $P$ where the extreme points are chosen from a conic $\gamma$. When $\gamma$ is either an ellipse or a parabola, we can choose a projective transformation which sends $\gamma$ to $\mb{S}^1$ and preserves the order type of $P$. We can also choose a suitable projective transformation when $\gamma$ is a hyperbola and all extreme points of $P$ are from the same component of $\gamma$. So we are done in these cases. Hence, suppose that $\gamma$ is a hyperbola and each component of $\gamma$ contains at least one extreme point of $P$. Let us denote the components of $\gamma$ by $\gamma_1$ and $\gamma_2$.
	
	For distinct $i,j \in [2]$, note that we cannot have 3 extreme points $p_1$, $p_2$ and $p_3$ of $P'$ which appear in order on $\gamma_i$ and another extreme point $q$ on $\gamma_j$, since $p_2 \in \conv \{p_1, p_3, q\}$ so $p_2$ is not extreme any more. This requires that the number $k$ of extreme points of $P$ is at most 4. If $k=3$, we can find a circle which passes through the 3 points. If the number is $k=4$, we find an additional point which forms, together with the extreme points of $P$, a point set $B$ in convex position. By using the same argument, if the unique conic which passes through all points in $B$ is a hyperbola, then $B$ should be on the same component of it. So, again, we can use a suitable projective transformation to conclude the proof.
\end{proof}

From Lemma \ref{lemma_uninscribable_circle_conic}, the following inscribability claim is straightforward to obtain.

\begin{proof}[Proof of Proposition \ref{prop_(5,3)_inscribable}]
	Let $P$ be a point set which realizes $\omega$. Since
	there are at most 5 extreme points in $P$, we can find a conic which passes through all the extreme points. So by Lemma \ref{lemma_uninscribable_circle_conic}, $\omega$ is inscribable.
\end{proof}

\subsection{Inscribing simple order types with at most 2 interior points}
\label{subsection_two_interior}

In this section, we prove Theorem  \ref{theorem_two_interior_inscribable}. The main ingredient of the proof is a function representation of a simple order type with 2 interior points, which we define below. This representation can be regarded as an extension of a classical correspondence between the order types of conowheels (i.e. point sets with at most 1 interior point) and 2-colored self-dual necklaces. For the order types of conowheels, the inscribability can be easily shown by using the  corresponding self-dual necklaces, see the proof of Theorem \ref{theorem_two_interior_inscribable} for  details. With a similar spirit, we show that for a simple order type $\omega$ with 2 interior points there is another inscribable order type $\omega'$ that shares the same function representation with $\omega$, which implies $\omega=\omega'$.

\begin{figure}[ht]
	\centering
	\includegraphics{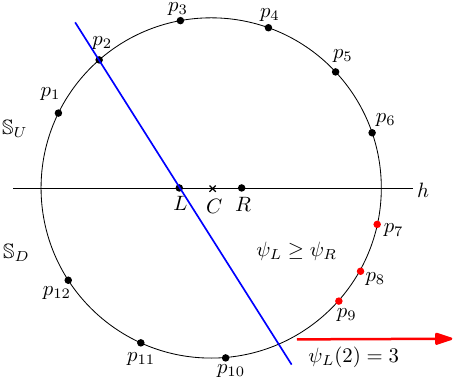}
	\caption{An example of a function representation. In this example, $n_1=n_2=6$.}
	\label{fig_2_int_realizability}
\end{figure}

Given a point set $P$ in general position with exactly two interior points, we assign a pair of functions $(\psi_L,\psi_R)$ to $P$ as follows; see Figure \ref{fig_2_int_realizability}. 
Let the pair $(L,R)$ be the interior points of $P$.
We can assume that the directed line $h=\Line{LR}$ is horizontal, where $L$ is left of $R$. Let $n_1$ (and $n_2$) respectively denote the number of points in $P$ strictly above (and below) $h$, and let $n=n_1+n_2$. 

Choose a point $C$ in $\conv(P) \cap h$ between $L$ and $R$, and label the points in $P \setminus \{L, R\}$ by $p_1, \dots, p_n$ in clockwise order around $C$ starting from $L$. Finally, we define a function $\psi_A:[n_1] \to [n_2]\cup \{0\}$ for $A\in \{L,R\}$ where $\psi_A(i)$ is the number of the points in $P$ strictly below $h$ and to the right of the directed line $\Line{Ap_i}$.  
%Note that $[k]$ means the set of positive integers at most $k$. 
We say that the pair of functions $(\psi_L, \psi_R)$ is the \textit{function representation} of $P$ with respect to the ordered pair $(L,R)$. 
%Note that when $n_1=0$, the common domain of $\psi_L$ and $\psi_R$ is empty. Note also that when $n_2=0$, the common codomain of $\psi_L$ and $\psi_R$ is a singleton $\{0\}$. In both cases, there is only one possible function representation.

Note that a function representation is completely determined by the order type of a given point set and the choice of the pair $(L,R)$. Therefore, there are at most two different function representations for each order type $\omega$ with exactly 2 interior points.

%Note that we can also extend the function representation to simple order types with at most 2 interior points, in which case we only require that the pair $(L,R)$ contain the interior points. 

\begin{prop} \label{prop_function_determines_ot}
Let $\omega$ and $\omega'$ be simple order types with 2 interior points. If $\omega$ and $\omega'$ share a common function representation $(\psi_L, \psi_R)$, then $\omega=\omega'$.
\end{prop}
\begin{proof}
	Let $P$ and $P'$ be realizations of $\omega$ and $\omega'$, and let $(L,R)$ and $(L',R')$ be ordered pairs of interior points of $P$ and $P'$, repsectively, which are used to define $(\psi_L, \psi_R)$. As we did in the above, we assume $h=\overline{LR}$ (or $h'=\overline{L'R'}$) is horizontal and $L$ (or $L'$) is left to $R$ (or $R'$, respectively). Also similarly with the above, we order clockwise the extreme points of $P$ as $p_1, \dots, p_n$ with respect to $(L,R)$. We also order the extreme points of $P'$ as $p_1', \dots, p_n'$ with respect to $(L',R')$. We claim that the bijection $\Phi: P \to P'$ defined as $\Phi(L)=L'$, $\Phi(R)=R'$, and $\Phi(p_i)=p_i'$ for every $i\in [n]$ preserves the orientation of every triple in $P$. 
	
	The claim obviously holds when a triple of $P$ contains all or none of $L$ and $R$. So, without loss of generality, we assume that the triple contains $L$ but not $R$, and two other points $p_i$ and $p_j$ where $i<j$. There are two cases to consider.
	
	\smallskip
	
	(Case 1) \df{When $p_i$ and $p_j$ are separated by $h$}: By our setting, $p_i$ is above $h$ and $p_j$ is below $h$, and a similar situation holds for $p_i'$, $p_j'$ and $h'$. It is enough to show that $p_j$ is on the right to the directed line $\overline{Lp_i}$ if and only if $p_j'$ is on the right to the directed line $\overline{L'p_i'}$. By the definition of function representations, the former and the latter condition is equivalent to that $\psi_L(i) \geq j-n_1$, where $n_1$ is the common size of the domain of $\psi_L$ and $\psi_R$ which is the same as the number of extreme points of $P$ (or $P'$) above $h$ (or $h'$, respectively). So the orientation is preserved in this case.
	
	\smallskip 
	
	(Case 2) \df{When $p_i$ and $p_j$ are at the same side of $h$}: 
	We only consider the case when $p_i$ and $p_j$ are both above $h$, and the other case can be argued similarly. 
	Note that $\conv(P) \cap h$ is on the right side of the directed line $\overline{p_ip_j}$. In particular, this implies $L$ should be also on the right to $\overline{p_ip_j}$. Since the same happens with $p_i'$, $p_j'$, $L'$ in $P'$, the orientation is preserved in this case.
\end{proof}

\medskip 

Suppose that a simple order type with two interior points has a function representation $(\psi_L, \psi_R)$ with the domain $[n_1]$ and the codomain $[n_2]\cup \{0\}$. It is easy to check that $(\psi_L, \psi_R)$ satisfies
\begin{itemize}
	\item[(i)] $n_1 \geq 1$ and $n_2 \geq 1$. \label{con_not_vacuous}
\end{itemize}
In particular, (i) implies the domain is non-empty. $(\psi_L, \psi_R)$ also has the following properties. The verification is left to the reader.

\begin{itemize}
	\item[(ii)] $\psi_L$ and $\psi_R$ are monotonically increasing. \label{con_increasing}
	\item[(iii)] $\psi_L(i)\geq \psi_R(i)$ for every $i\in [n_1]$. \label{con_L_large}
	\item[(iv)] $\psi_L \not\equiv n_2$ and $\psi_R \not\equiv 0$. \label{con_not_constant}
\end{itemize}

The following theorem shows that the converse is true even when restricted to inscribable order types.

\begin{theorem} \label{theorem_two_functions_inscribable}
Let $(\psi_L, \psi_R)$ be a pair of functions with the domain $[n_1]$ and the codomain $[n_2]\cup \{0\}$. Suppose that $(\psi_L, \psi_R)$ satisfy the above conditions (i), (ii), (iii) and (iv). Then there is a simple inscribable order type with 2 interior points that has $(\psi_L, \psi_R)$ as its function representation.
\end{theorem}

\begin{proof}[Proof of Theorem \ref{theorem_two_interior_inscribable}]
For the case of one interior point,  
Pilz, Welzl, and Wettstein gave a bijection $\Psi$ from the set of order types of conowheels of size $n+1$ to the set of 2-colored self-dual necklaces with $2n$ beads \cite[Theorem 1.1]{one_point_pilz_welzl}, and their construction shows that an image of $\Psi^{-1}$ is inscribable. 
	
Now let us consider the 2 interior point case. For a simple order type $\omega$ with 2 interior points, we find a function representation $(\psi_L, \psi_R)$. Since $(\psi_L, \psi_R)$ satisfies the conditions (i), (ii), (iii) and (iv), 
by Theorem \ref{theorem_two_functions_inscribable} there is a simple inscribable order type $\omega'$ with 2 interior points which has a function representation $(\psi_L, \psi_R)$. By Proposition \ref{prop_function_determines_ot}, $\omega= \omega'$. This implies $\omega$ inscribable. 
\end{proof}

We introduce some notation before we prove Theorem \ref{theorem_two_functions_inscribable}.
For a directed line $\overline{AB}$ which is not horizontal,
let $\rhalf{A,B}$ (or $\lhalf{A,B}$) be the open half-plane determined by $\overline{AB}$, which is right (or left, respectively) of $\overline{AB}$. For a point $p$ on the unit circle $\mb{S}^1$ and another point $A$ with $||A||<1$, let $p^A \neq A$ be the other intersection point of $\Line{pA}$ and $\mb{S}^1$.

\begin{proof}[Proof of Theorem \ref{theorem_two_functions_inscribable}]
	Let $(\psi_L, \psi_R)$ be a pair of functions which satisfy the conditions. Let $L=(-1/2,0)$ and $R=(1/2,0)$. Let $\mb{S}_U$ and $\mb{S}_D$ be the open semicircles in the unit circle $\mb{S}^1$ which are above and below $\overline{LR}$, respectively.

	We first inductively construct $P_i$ for $i\in [n_1]$ (note that $n_1 \geq 1$ by (i)). 
	Each $P_i$ is supposed to have the following property which depends on the index $i$.
	\begin{itemize}
		\item[(F)] $(\psi_L|_{[i]}, \psi_R|_{[i]})$ is the function representation of $P_i$ with respect to $(L,R)$  where the codomain of the functions are restricted to $[\psi_L(i)]\cup \{0\}$.
	\end{itemize}

	\smallskip 
	
 For the basis step, we choose $p_1$ arbitrarily from $\mb{S}_U$. Then we choose points in the clockwise order $q_1, \dots, q_{\psi_R(1)}$ from $\rhalf{R,p_1}\cap \mb{S}_D$. By (iii), $\psi_L(1)\geq \psi_R(1)$. When $\psi_L(1)= \psi_R(1)$, we stop here. If $\psi_L(1)> \psi_R(1)$, then we choose in the clockwise order $q_{\psi_R(1)+1}, \dots, q_{\psi_L(1)}$ from  $\rhalf{L, p_1}\cap \lhalf{R,p_1}\cap \mb{S}_D$. By the construction the point set $P_1=\{p_1, q_1, \dots, q_{\psi_L(1)}, L, R\}$ satisfies (F).
	
	\smallskip
	
	Now, suppose that a point set
	$$P_{i-1}=\{p_1, \dots, p_{i-1}\}\cup \{q_1, \dots, q_{\psi_L(i-1)}\}\cup\{L,R\} $$
	is already constructed and satisfies (F), where $p_1, \dots, p_{i-1}$ are points on $\mb{S}_U$ and $q_1, \dots, q_{\psi_L(i-1)}$ are points on $\mb{S}_D$. These points are all ordered clockwise. We extend $P_{i-1}$ to 	$$P_i= \{p_1, \dots, p_i\}\cup \{q_1, \dots, q_{\psi_L(i)}\}\cup\{L,R\}$$ by suitably choosing $p_i$ in $\mb{S}_U$ and $q_{\psi_L(i-1)+1}, \dots, q_{\psi_L(i)}$ in $\mb{S}_D$. There are two cases (see Figure \ref{fig_2_int_inscribability}). 
	\begin{figure}[ht]
		\centering
		\includegraphics{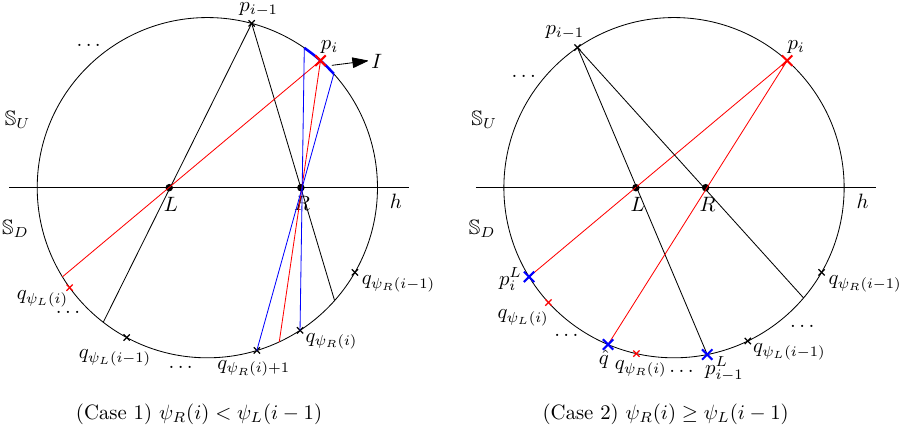}
		\caption{The inductive step in the proof of Theorem \ref{theorem_two_functions_inscribable}.}
		\label{fig_2_int_inscribability}
	\end{figure}
	
	\smallskip 
	
	(Case 1) \df{When $\psi_R(i)< \psi_L(i-1)$}:  Let $I$ be a closed arc in $\mb{S}_U$ with $q_{\psi_R(i)}^R$ and $q_{\psi_R(i)+1}^R$ as its boundaries. If $p_{i-1}$ is positioned right to $I$, then we have more than $\psi_R(i)$ points in $\{q_1, \dots, q_{\psi_L(i-1)}\} \cap \rhalf{R,p_{i-1}}$. Since  $\psi_R(i)\geq \psi_R(i-1)$ by (ii), this leads to a contradiction to the induction hypothesis. Hence, $p_{i-1}$ is either in $I$ or left to $I$. In either case, we can choose $p_i$ from $I$ to the right of $p_{i-1}$.  Then, the reflected point $p_i^L$ is to the left of $p_{i-1}^L$ on $\mb{S}_D$. Note that $\psi_L(i)\geq \psi_L(i-1)$ by (ii). When $\psi_L(i)> \psi_L(i-1)$, we choose points $q_{\psi_L(i-1)+1}, \dots, q_{\psi_L(i)}$ between $p_{i-1}^L$ and $p_i^L$ from $\mb{S}_D$ in the clockwise order. This completes the construction of $P_i$. By construction, 
	$P_i$ satisfies the condition (F).
	
	\smallskip
	
	(Case 2) \df{When $\psi_R(i)\geq \psi_L(i-1)$}: We choose a point $\hat{q} \in \mb{S}_D$ which is left to $p_{i-1}^L$, and let $p_i:=\hat{q}^R \in \mb{S}_U$. We choose $q_{\psi_L(i-1)+1}, \dots, q_{\psi_R(i)}$ on $\mb{S}_D$ in the clockwise order between $p_{i-1}^L$ and $\hat{q}$ if $\psi_R(i)>\psi_L(i-1)$. Note that $p_i^L$ is left to $\hat{q}$. By (iii), we have $\psi_L(i)\geq \psi_R(i)$. If $\psi_L(i)> \psi_R(i)$, we choose $q_{\psi_R(i)+1}, \dots, q_{\psi_L(i)}$ on $\mb{S}_D$ between $\hat{q}$ and $p_i^L$. Again, 
	the above procedure gives $P_i$ which satisfies the condition (F).
	
	\smallskip
	This inductive procedure yields a point set $P_{n_1}$ which satisfies (F) where $p_{n_1}$ is the rightmost point on $\mb{S}_U \cap P_{n_1}$. 
	If $n_2>\psi_L(n_1)$, we additionally put $q_{\psi_L(n_1)+1}, \dots, q_{n_2}$ in the clockwise order on $\mb{S}_D$, on the left to $p_{n_1}^L$. Adding these additional points to $P_{n_1}$ gives the point set $P$, and the function representation of $P$ with repsect to $(L,R)$ is exactly $(\psi_L, \psi_R)$. One can easily check that $P$ is in general position by construction.

\smallskip

It remains to show that $L$ and $R$ are interior points. 
By (iv), there are points $p_i$ and $p_j$ such that $\psi_L(i) < n_2$ and $\psi_R(j) > 0$, 
so $L \in \conv(\{p_i,p_n,R\})$ and $R \in \conv(\{p_j,p_{n_1+1},L\})$. 
Thus, $L$ and $R$ are interior points.
\end{proof}

\begin{remark} \label{remark_convex_curve}
We can apply the same construction of Theorem \ref{theorem_two_functions_inscribable} to the boundary of a convex body $C$ if $\partial C$ does not contain a line segment. Also, $L$ and $R$ can be chosen arbitrarily among distinct pairs of interior points of $C$.
\end{remark}

\subsection{Counting simple order types with at most 2 interior points} \label{subsection_counting}

In this section, we give an estimate of the number of simple order types with at most 2 interior points.

For nonnegative integers $n$ and $i$, let $\mathcal{O}_i(n)$ be the set of simple order types of size $n+2$ with exactly $i$ interior points. Let $\mathcal{F}(n_1, n_2)$ be the set of pairs of functions $(\psi_L, \psi_R)$ from the domain $[n_1]$ to the codomain $[n_2]\cup \{0\}$ which satisfy the conditions (i)-(iv) from Subsection \ref{subsection_two_interior}. Note that by the condition (i), both $\mathcal{F}(0,n_2)$ and $\mathcal{F}(n_1,0)$ are empty. Also, let 
\[\mathcal{F}(n):=\bigcup_{m=0}^{n} \mathcal{F}(m,n-m).\]
%The following corollary is a direct consequence from discussions of Subsection \ref{subsection_two_interior}.

In Subsection \ref{subsection_two_interior} we showed that each element of $\mathcal{F}(n)$ defines a unique order type, but they might not define distinct order types. 
The function representation for a given order type depends only on the choice of which interior point is $L$ and which is $R$.  Swapping $L$ and $R$ changes the function representation for some order types, but not for others.  Consequently, we have the following.

\begin{corollary} \label{corollary_counting}
$|\mathcal{F}(n)|/2 \leq |\mathcal{O}_2(n)| \leq |\mathcal{F}(n)|$. In particular, $|\mathcal{O}_2(n)| \in \Theta(|\mathcal{F}(n)|)$. 
\end{corollary}
\begin{proof}
By Theorem \ref{theorem_two_functions_inscribable}, every pair of functions in $\mathcal{F}(n)$ is obtained as a function representation of an order type in $\mathcal{O}_2(n)$. By Proposition \ref{prop_function_determines_ot}, the order type is determined uniquely for every pair in $\mathcal{F}(n)$. So, the relation $\sim$ on $\mathcal{F}(n)$ defined by that $(f_L, f_R) \sim (g_L, g_R)$ if $(f_L, f_R)$ and $(g_L, g_R)$ are function representations of the same order type is an equivalence relation. Since every order type in $\mathcal{O}_2$ has a function representation in $\mathcal{F}(n)$, there is a 1-1 correspondence between $\mathcal{O}_2(n)$ and $\mathcal{F}/\sim$. The conclusion comes from that every equivalence class in $\mathcal{F}/\sim$ has size either 1 or 2. 
\end{proof}

\medskip

A \textit{plane partition} is an array $\pi=(\pi_{i,j})_{i,j \geq 1}$ of nonnegative integers that has finite support and is weakly decreasing in rows and columns. That is, $\pi$ has only finitely many nonzero entries and $\pi_{i, j}\geq \pi_{i, j+1}$ and $\pi_{i, j}\geq \pi_{i+1, j}$. For nonnegative integers $a$, $b$ and $c$, let $\mathcal{P}(a,b,c)$ be the set of all plane partitions $\pi$ such that the support of $\pi$ is contained in an index subset $[a]\times [b]$ and the entries of $\pi$ are bounded above by $c$. 
Note that plane partitions correspond to lozenge tilings, but we will not make use of that correspondence here \cite{handbook_enumerative_combinatorics}.
The following counting result is known.

\begin{theorem}[MacMahon \cite{macmahon1916combinatory}, 
see also p.545 of \cite{handbook_enumerative_combinatorics}, or p.378 of \cite{stanley1999enumerative} for an equivalent formula]
\label{theorem_logenze}
	\begin{align*}
	|\mathcal{P}(a,b,c)|=\frac{H(a + b + c)H(a)H(b)H(c)}{H(a + b)H(a + c)H(b + c)}
	\end{align*}
	where $H(0) = H(1) = 1$ and $H(n) = 1!2!\cdots(n-1)!$ for $n > 1$.
\end{theorem}
	
Let $\pi : \mathcal{F}(n_1, n_2) \to \mathcal{P}(2, n_1, n_2)$ by 
$\pi_{1, j} (\psi_L, \psi_R)=\psi_L(n_1-j+1)$ and $\pi_{2, j} (\psi_L, \psi_R)=\psi_R(n_1-j+1)$, 
and observe that this map is injective.
%For $(\psi_L, \psi_R) \in \mathcal{F}(n_1, n_2)$, we can easily correspond a plane partition $\pi \in \mathcal{P}(2, n_1, n_2)$ by setting $\pi_{1, j}=\psi_L(n_1-j+1)$ and $\pi_{2, j}=\psi_R(n_1-j+1)$. 
Moreover, pairs of functions in $\mathcal{F}(n_1, n_2)$ exactly correspond to plane partitions in
$$\mathcal{P}(2,n_1,n_2)\setminus (\mathcal{M}(2,n_1,n_2) \cup \mathcal{Z}(2,n_1, n_2)),$$
where
\begin{align*}
\mathcal{M}(2,n_1,n_2)&= \{\pi \in \mathcal{P}(2,n_1, n_2):\pi_{1,j}=n_2 \textrm{ for every $1 \leq j \leq n_1$} \},\textrm{ and }\\
\mathcal{Z}(2,n_1, n_2)&=\{\pi \in \mathcal{P}(2,n_1, n_2):\pi_{2,j}=0 \textrm{ for every $j\geq 1$} \}.
\end{align*}
Note that when $n_1=0$ or $n_2=0$, $|\mathcal{P}(2,n_1,n_2)|= |\mathcal{Z}(2,n_1, n_2)| = |\mathcal{M}(2,n_1,n_2)|=1$ where the only element of these sets is the array whose entries are all 0. We also have $|\mathcal{Z}(2,n_1, n_2) \cap  \mathcal{M}(2,n_1,n_2)|=1$ for every nonnegative integers $n_1$ and $n_2$. Therefore,
\begin{align}
&|\mathcal{F}(n)|\notag=\sum_{m=0}^n |\mathcal{F}(m,n-m)|\\
=&\sum_{m=0}^n |\mathcal{P}(2,m,n-m)\setminus (\mathcal{M}(2,m,n-m) \cup \mathcal{Z}(2,m, n-m))| \notag \\
=&\sum_{m=0}^n (|\mathcal{P}(2,m,n-m)|- (|\mathcal{M}(2,m,n-m)|+ |\mathcal{Z}(2,m, n-m)|-1)) \notag\\
=&\sum_{m=0}^{n}|\mathcal{P}(2,m,n-m)|-\sum_{m=0}^n|\mathcal{M}(2,m,n-m)|- \sum_{m=0}^n|\mathcal{Z}(2,m,n-m)|+(n+1). \label{eq_exact}
\end{align}
Using Theorem \ref{theorem_logenze}, the first term in (\ref{eq_exact}) is 
\begin{align}
&\sum_{m=0}^{n}|\mathcal{P}(2,m,n-m)|\notag\\
=&\sum_{m=0}^{n} \frac{H(n+2)H(m)H(n-m)H(2)}{H(n)H(m+2)H(n-m+2)}
=\sum_{m=0}^{n} \frac{(n+1)!n!}{m!(m+1)!(n-m)!(n-m+1)!}\notag\\
=&\frac{1}{n+1}\sum_{m=0}^{n} \frac{(n+1)!}{(m+1)!(n-m)!}\cdot\frac{(n+1)!}{m!(n-m+1)!}
\notag\\
=&\frac{1}{n+1}\sum_{m=0}^{n} \binom{n+1}{m}\binom{n+1}{n-m}
=\frac{1}{n+1}\binom{2n+2}{n}\notag\\
=&\frac{1}{n+1}\cdot\frac{(2n+2)!}{n!(n+2)!}
=\frac{(2n+2)(2n+1)}{(n+1)^2(n+2)} \binom{2n}{n}
\sim \frac{4}{n}\cdot \frac{4^n}{\sqrt{\pi n}}
=\frac{4^{n+1}}{\sqrt{\pi}n^{3/2}}, \label{eq_asymptote}
\end{align}
where the asymptote $\binom{2n}{n}\sim \frac{4^n}{\sqrt{\pi n}}$ can be obtained from Stirling's formula. Also, by only reading off the non-constant row, the  summations which appear at the second and third terms in (\ref{eq_exact}) are same as
\begin{align}
&\sum_{m=0}^{n}|\mathcal{P}(1,m,n-m)|\notag\\
=&\sum_{m=0}^{n} \frac{H(n+1)H(m)H(n-m)}{H(n)H(m+1)H(n-m+1)}
=\sum_{m=0}^{n} \frac{n!}{m!(n-m)!}=2^n.\notag
\end{align}
Therefore the first term of (\ref{eq_exact}) is the leading term, and by (\ref{eq_asymptote}) we have  $|\mathcal{F}(n)| \in \Theta(\frac{4^{n}}{n^{3/2}})$ which implies $|\mathcal{O}_2(n)| \in \Theta(\frac{4^{n}}{n^{3/2}})$ by Corollary \ref{corollary_counting}.
\smallskip

The following counting result is known for order types of conowheels. The proof uses a 1-1 correspondence between order types of conowheels and 2-colored self-dual necklaces.
\begin{theorem}[\cite{self_dual_necklace-brouwer, self_dual_necklace-palmer, one_point_pilz_welzl, grunbaum_polytope_book}] \label{theorem_conowheel}
	The number of order types of conowheels, that is, simple order types with at most 1 interior point of size $n+1$ is
	$$\frac{1}{4n}\sum_{2\nmid k|n} \varphi(k)2^{n/k}+2^{\lfloor (n-3)/2\rfloor}\in \Theta(2^n/n),$$
	where $\varphi$ is Euler’s totient function.
\end{theorem}

The above computation and Theorem \ref{theorem_conowheel} conclude the following.

\begin{theorem} \label{theorem_counting_2_interior}
	The number of simple order types with at most 2 interior points of size $n+2$ is $\Theta(\frac{4^n}{n^{3/2}})$.
\end{theorem}

\begin{remark}
In \cite{felsner_pseudoline_counting}, a similar correspondence between pseudoline arrangements and zonotopal tilings, which also correspond to plane partitions, is used to construct many pseudoline arrangements and count their number.
\end{remark}

\section{Concluding remarks}
\label{section_concluding}
\subsection{Realizing interior points on a circle}
A natural question which arises is whether an order type where the interior points are in convex position can be realized as a point set such that the interior points are on a circle. The configuration in Figure \ref{fig_internal_nonrealize} answers the question negatively.

\begin{figure}[ht]
\centering
\includegraphics{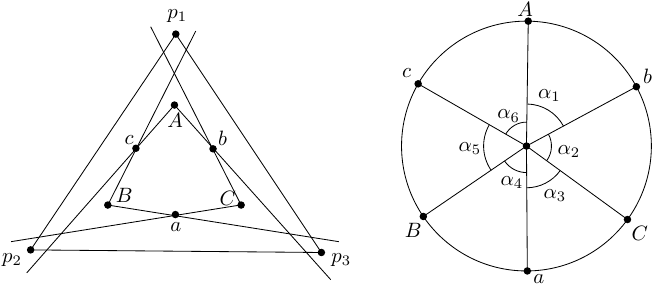}
\caption{Left: A configuration such that the interior points cannot be realized on a circle. Right: When the interior points are on a circle.}
\label{fig_internal_nonrealize}
\end{figure}
\begin{eg}
Let us denote the configuration on the left of Figure \ref{fig_internal_nonrealize} by $Q$. $Q$ has $A$, $B$, $C$, $a$, $b$ and $c$ as the interior points, and the $p_1$, $p_2$ and $p_3$ as extreme points. Note that $C$ is inside the triangle bounded by the lines $\Line{Ab}$, $\Line{Ba}$ and $\Line{AB}$. Similar properties also hold for $A$ and $B$, and their corresponding lines.

For a contradiction, suppose that the 6 interior points are on a circle as in the right of Figure \ref{fig_internal_nonrealize}. By the properties of $Q$, the angles $\alpha_1, \dots, \alpha_6$ satisfy
\begin{align*}
&\alpha_2+\alpha_3 < \alpha_5+\alpha_6,\\
&\alpha_1+\alpha_6 < \alpha_3+\alpha_4, \textrm{ and}\\
&\alpha_4+\alpha_5 < \alpha_1+\alpha_2.
\end{align*}
This implies $2\pi < 2\pi$, which leads to a contradiction. \qed
\end{eg}

\subsection{Characterization of minimally uninscribable order types}
Theorem \ref{theorem_main_minimally_uninscribable} shows that there are infinitely many minimally uninscribable order types. However, there might be many more minimally uninscribable order types.

\begin{Q}
What is the complete list of minimally uninscribable order types? Is there a non-trivial equivalent condition for being minimally uninscribable?
\end{Q}

Also, note that the theorems do not exclude the case when there are only finitely many minimally uninscribable order types with a fixed number of interior points.

\begin{Q}
Are there finitely many minimally uninscribable order types with exactly 3 interior points?
\end{Q}

\subsection{Abstract order types with few interior points}
Given an assignment $\chi$ of $\{+,0,-\}$ to each triple of a ground set, we say that $\chi$ is realizable when there is some point configuration in the plane where each triple has the assigned orientation.
\df{Abstract order types}, or equivalently \df{rank 3 acyclic chirotopes}, are assignments that satisfy  certain axioms, which are necessary combinatorial conditions for realizability \cite{om_book}. 
However, these axioms are not sufficient conditions for realizability. 
One reason for interest in abstract order types is that the problem of deciding realizability is computationally intractable.  In some situations we still want to work with orientation information like that captured by an order type, but we may not have any guarantee that the data came from an actual point set, such as in a computer program that works with orientation information provided by the user. 
It is easier to verify that a given assignment is an abstract order type. 
Theorems about point sets in the plane often only use the axioms of rank 3 acyclic chirotopes, and consequently the same theorem may hold more generally for abstract order types by the same proof as for order types.
Such is the case for the results in Subsection \ref{subsection_two_interior}.
Hence, we have the following.

\begin{theorem}
All simple abstract order types with at most 2 interior points are inscribable, and in particular are realizable.
\end{theorem}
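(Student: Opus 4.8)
The plan is to show that the entire development in Subsection \ref{subsection_two_interior} goes through verbatim for abstract order types, and then invoke realizability as a byproduct. The key observation is that every geometric fact used in that subsection is a statement about orientations of triples, and never about metric or coordinate data. So I would proceed by auditing each ingredient and checking that it uses only the chirotope axioms.

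First I would re-examine the function representation $(\psi_L,\psi_R)$. Its definition rests on choosing interior points $L,R$, a separating ``line'' $h=\Line{LR}$, ordering the remaining points clockwise around a point $C$, and recording, for each point above $h$, how many points below $h$ lie to the right of $\Line{Ap_i}$. Every such notion---``above/below $h$'', ``to the right of $\Line{Ap_i}$'', the clockwise circular order---is determined entirely by the orientations of triples, hence is well defined for an abstract order type once one fixes which (at most two) points are interior. Concretely, ``above $h$'' means the orientation of $(L,R,p)$ is positive, and ``$p_j$ is to the right of $\Line{Lp_i}$'' means $(L,p_i,p_j)$ is negatively oriented; both are recorded directly in the chirotope. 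Thus the function representation is defined purely combinatorially, and properties (i) monotonicity and (ii) $\psi_L\ge\psi_R$ follow from the same axiomatic arguments.

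Next I would check that Proposition \ref{prop_function_determines_ot} is already phrased so as to apply. Its proof distinguishes Case 1 (the two points straddle $h$) and Case 2 (they lie on the same side), and in each case it reduces the comparison of orientations in $P$ and $P'$ to an inequality between $\psi_L(i)$, $\psi_R(i)$, and $j-n_1$. These reductions only invoke that $p_i,p_j$ are extreme and that $L$ (or $R$) sits below the chord---facts encoded by orientations. The only subtle spot is the ``perturb to avoid vertical lines'' remark in Case 2, which has no abstract analogue; but in the abstract setting there are no coordinates and hence no degenerate verticals to avoid, so that sentence is simply vacuous and the orientation argument stands. Then Theorem \ref{theorem_two_functions_inscribable} furnishes, for each staircase pair, an honest inscribed \emph{point} set realizing it; combined with the combinatorial Proposition \ref{prop_function_determines_ot}, any abstract order type with at most two interior points shares a function representation with this genuine inscribed configuration, and therefore coincides with it as an abstract order type. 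This simultaneously proves inscribability and realizability.

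The main obstacle I anticipate is not any single deep step but the bookkeeping of confirming that \emph{no} hidden appeal to coordinates creeps in---in particular verifying that ``at most two interior points'' is itself a chirotope-invariant notion (it is: a point is interior exactly when it is not a vertex of the convex hull, which the acyclicity and the mutual orientations detect) and that the clockwise ordering around $C$ is consistent for an abstract order type. Once these invariance checks are in place, the argument is a direct transfer: every lemma's proof is reused word for word, with geometric phrases reinterpreted as orientation predicates, and the realizability conclusion is immediate from Theorem \ref{theorem_two_functions_inscribable} producing an actual point set.
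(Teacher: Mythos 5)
Your proposal matches the paper's argument: the paper likewise justifies this theorem by observing that every step of Subsection \ref{subsection_two_interior} (the function representation, Proposition \ref{prop_function_determines_ot}, and Theorem \ref{theorem_two_functions_inscribable}) uses only orientation data, so the same proof applies to rank~3 acyclic chirotopes, with realizability falling out because Theorem \ref{theorem_two_functions_inscribable} produces an actual inscribed point set. Your audit is in fact more detailed than the paper's, which asserts the transfer without spelling out the invariance checks.
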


There is a well-known nonrealizable abstract order type due to Ringel that has 4 interior points; see Figure \ref{fig_simple-non-Pappus}
(\cite{ringel_simple-non-Pappus, grunbaum_arrangements_spreads}, see also \cite[Section 8.3]{om_book}). 
This leaves us with the following question.

\begin{Q}
Are all simple abstract order types with exactly 3 interior points realizable?
\end{Q}

\begin{figure}[ht]
\centering
\includegraphics{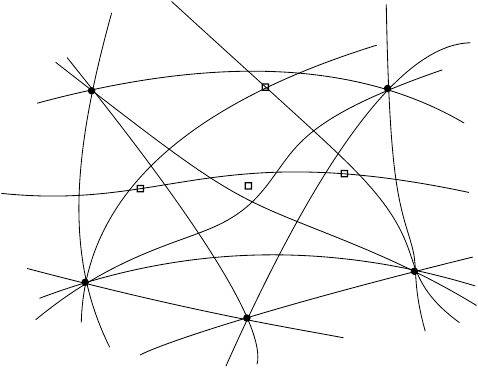}
\caption{Ringel's non-realizable abstract order type $\mathsf{Rin}(9)$. The points marked by boxes are interior points.}
\label{fig_simple-non-Pappus}
\end{figure}

\subsection{Counting order types with few interior points}

Here we have counted simple order types with 2 interior points asymptotically, but we could also consider the analogous question for the case of a fixed number of interior points.

\begin{Q}
For a fixed value $k \geq 3$, how many simple order types of size $n+k$ have $k$ interior points asymptotically? 
What about for simple abstract order types? 
\end{Q}
 
Goaoc and Welzl have shown that the expected number of extreme points of a simple order type chosen uniformly at random remains close to 4 as $n$ grows \cite{goaoc2023convex}. 
This suggests that the number simple order types with $k$ interior points should be much smaller than the total number of simple order types.

\section*{Acknowledgements}
We would like to thank Xavier Goaoc for bringing the problem of inscribability to our attention. We are also grateful to Jang Soo Kim for his helpful explanation about plane partitions.

\bibliographystyle{plain}
\bibliography{bibliography}
	
\end{document}